\numberwithin{equation}{section}
\DeclareRobustCommand\bigop[1]{%
  \mathop{\vphantom{\sum}\mathpalette\bigop@{#1}}\slimits@
}
\newcommand{\bigop@}[2]{%
  \vcenter{%
    \sbox\z@{$#1\sum$}%
    \hbox{\resizebox{\ifx#1\displaystyle.9\fi\dimexpr\ht\z@+\dp\z@}{!}{$\m@th#2$}}%
  }%
}
\newenvironment{myproof}[2]{\paragraph{\textit{Proof of {#1} }{#2}.}}{\hfill$\square$}
\theoremstyle{plain}
\newtheorem{theorem}{Theorem}[section]
\newtheorem{lemma}[theorem]{Lemma}
\newtheorem{proposition}[theorem]{Proposition}
\theoremstyle{definition}
\newtheorem{definition}[theorem]{Definition}
\newtheorem{remark}[theorem]{Remark}
\def\CC{\mathbb{C}}
\def\QQ{\mathbb{Q}}
\def\RR{\mathbb{R}}
\def\ZZ{\mathbb{Z}}
\def\PP{\mathbb{P}}
\def\ds{\displaystyle}
\def\a{\alpha}
\def\b{\beta}
\def\g{\gamma}
\def\s{\sigma}
\def\G{\Gamma}
\def\w{\omega}
\def\l{\lambda}
\def\ra{\rightarrow}
\def\mb{\mathbf}
\def\O{\Omega}
\def\LL{\mathcal{L}}
\def\BB{\mathcal{B}}
\def\FF{\mathcal{F}}
\def\MM{\overline{\mathcal{M}}}
\def\OO{\mathcal{O}}
\def\gg{\mathfrak{g}}
\def\hh{\mathfrak{t}}
\def\ol{\overline}
\def\scl{\sigma}
\def\ascl{\xi}
\newcommand{\localb}[1]{[t^{#1}]}
\def\Q{Q^{\vee}}
\newcommand{\fib}[1]{\mathcal{E}_{#1}(G/P)}
\def\mr{\mathring}
\def\wl{wt_{\lambda}}
\def\degg{c}
\def\MMw{\MM(\wl,\eta)}
\def\MMm{\MM(\mu,\eta)}
\def\ag{\mathcal{G}r}
\def\bl{\bullet}
\def\AA{\mathbb{A}}
\def\vp{\varphi}
\def\fibb{\mathcal{E}}
\def\wll{w(\lambda)}
\def\TT{\mathcal{T}}
\DeclareMathOperator{\ec}{Eff}
\DeclareMathOperator{\ecb}{Eff}
\DeclareMathOperator{\lie}{Lie}
\DeclareMathOperator{\fof}{Frac}
\DeclareMathOperator{\pr}{pr}
\DeclareMathOperator{\ev}{ev}
\DeclareMathOperator{\id}{id}
\DeclareMathOperator{\pd}{PD}
\DeclareMathOperator{\coker}{coker}
\DeclareMathOperator{\Pic}{Pic}
\DeclareMathOperator{\spec}{Spec}
\DeclareMathOperator{\eend}{End}
\DeclareMathOperator{\ehom}{Hom}
\DeclareMathOperator{\pt}{pt}
\DeclareMathOperator{\vdim}{vdim}
\def\RRR{H_T^{\bl}(\pt)}
\newcommand*\bulletsmall{\mathpalette\bulletsmall@{.5}}
\newcommand*\bulletsmall@[2]{\mathbin{\vcenter{\hbox{\scalebox{#2}{$\m@th#1\bullet$}}}}}
\begin{document}
\title[PLS's theorem is an affine analogue of quantum Chevalley formula]{Peterson-Lam-Shimozono's theorem is an affine analogue of quantum Chevalley formula}

\author{Chi Hong Chow}

\begin{abstract} 
We give a new proof of an unpublished result of Dale Peterson, proved by Lam and Shimozono, which identifies explicitly the structure constants, with respect to the quantum Schubert basis, for the $T$-equivariant quantum cohomology $QH^{\bullet}_T(G/P)$ of any flag variety $G/P$ with the structure constants, with respect to the affine Schubert basis, for the $T$-equivariant Pontryagin homology $H^T_{\bullet}(\mathcal{G}r)$ of the affine Grassmannian $\mathcal{G}r$ of $G$, where $G$ is any simple simply-connected complex algebraic group. 

Our approach is to construct an $H_T^{\bullet}(pt)$-algebra homomorphism by Gromov-Witten theory and show that it is equal to Peterson's map. More precisely, the map is defined via Savelyev's generalized Seidel representations which can be interpreted as certain Gromov-Witten invariants with input $H^T_{\bullet}(\mathcal{G}r)\otimes QH_T^{\bullet}(G/P)$. We determine these invariants completely, in a way similar to how Fulton and Woodward did in their proof of the quantum Chevalley formula.
\end{abstract}


\maketitle

\section{introduction}\label{1}
Let $G$ be a simple simply-connected complex algebraic group. The quantum (resp. affine) Schubert calculus studies the algebra structure on the $T$-equivariant quantum cohomology $QH^{\bl}_T(G/B)$ of the complete flag variety $G/B$ (resp. the $T$-equivariant Pontryagin homology $H^T_{\bl}(\ag)$ of the affine Grassmannian $\ag$ of $G$) in terms of the quantum Schubert classes $\{q^{\b}\scl_v\}_{(\b,v)\in \ecb\times W}$ (resp. the affine Schubert classes $\{\ascl_{\wl}\}_{\wl\in W_{af}^-}$). An unpublished result of Dale Peterson \cite{Peter}, announced during the lectures he gave at MIT in 1997, states that these two calculi are equivalent:
\begin{theorem} \label{main} The map
\[
\begin{array}{ccccc}
\Phi&:&H^T_{-\bl}(\ag)&\ra &QH^{\bl}_T(G/B)[q_i^{-1}|~i\in I]\\ [.5em]
& &  \ascl_{\wl} & \mapsto & q^{\l}\scl_{w}
\end{array}
\]
is a graded homomorphism of $\RRR$-algebras.
\end{theorem}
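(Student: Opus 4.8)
The plan is to build, by Gromov--Witten theory, an $H_T(pt)$-algebra homomorphism $\widetilde\Phi\colon H^T(Gr_G)\ra QH_T(G/B)[\ec^{-1}]$ in such a way that the homomorphism property is essentially formal, and then to prove that $\widetilde\Phi$ coincides with the map $\Phi$ of the theorem by computing it explicitly. For the construction I would use Savelyev's generalized Seidel representation. Since $Gr_G$ is homotopy equivalent to the based loop group $\Omega K$ of the maximal compact $K\subset G$, a cycle $c$ in $Gr_G$ produces, by a clutching construction, a family over $\PP^1\times c$ of smooth projective fibrations with fibre $G/B$; counting (in algebraic Gromov--Witten theory, which is legitimate since the total spaces are smooth projective) sections of prescribed degree meeting a given cycle in the fibre over $0$, and recording the image of the section at $\infty$, defines an operation $\mathcal{S}\colon H^T(Gr_G)\otimes QH_T(G/B)\ra QH_T(G/B)$ — the $H^T(Gr_G)$-input being the cycle $c$, the $QH_T(G/B)$-input the class in the fibre — whose interpretation as such Gromov--Witten invariants is the point the abstract advertises. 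Writing $\mathcal{S}_a:=\mathcal{S}(a\otimes-)$, after inverting $\ec$ (needed because negative powers of the quantum parameters occur) this makes $QH_T(G/B)[\ec^{-1}]$ a module over the Pontryagin ring $H^T(Gr_G)$, and, as in the Seidel formalism, $\mathcal{S}_{ab}=\mathcal{S}_a\circ\mathcal{S}_b$ (via the Gromov--Witten gluing/degeneration formula, concatenation of based loops corresponding to fibre sum of the associated $\PP^1$-bundles) while each $\mathcal{S}_a$ is $\star$-multiplication by the fixed element $\mathcal{S}_a(1)$. Setting $\widetilde\Phi(a):=\mathcal{S}_a(1)$ we then get $\widetilde\Phi(ab)=\mathcal{S}_{ab}(1)=\mathcal{S}_a(\mathcal{S}_b(1))=\mathcal{S}_a(\widetilde\Phi(b))=\widetilde\Phi(a)\star\widetilde\Phi(b)$, so \emph{some} $H_T(pt)$-algebra homomorphism exists at little cost; the real content is the identification $\widetilde\Phi=\Phi$.

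By $H_T(pt)$-linearity, and since $\{\ascl_{\wl}\}_{\wl\in W_{af}^-}$ is an $H_T(pt)$-basis of $H^T(Gr_G)$, it suffices to prove $\widetilde\Phi(\ascl_{\wl})=q^{\l}\scl_w$ for every $\wl\in W_{af}^-$. This evaluation is the technical core and the step I expect to be the main obstacle. The class $\ascl_{\wl}$ is represented by a Schubert variety of $Gr_G$, over which the associated $G/B$-bundle over $\PP^1$ is given explicitly and carries a Bruhat-type stratification, so that the Seidel/Gromov--Witten invariants in question turn into intersection numbers of sections with Schubert cycles in the fibres over $0$ and $\infty$. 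I would mimic Fulton--Woodward's analysis of the quantum Chevalley coefficients: (i) a dimension count, using positivity of the relevant section classes, to cut the sum down to the finitely many classes that can contribute; (ii) a transversality argument exploiting the residual torus (or Borel) action on the total space together with Kleiman--Bertini, so that the invariants are genuinely enumerative; (iii) identification of the surviving contributions with honest counts of sections through general points, carried out via a Bialynicki--Birula / Bruhat decomposition of the total space and a moving argument. Checking that the degrees line up so that precisely the one term $q^{\l}\scl_w$ survives — with coefficient $1$, everything else vanishing — is the delicate bookkeeping; the facts that the bundle is genuinely twisted (not a product) and that transversality is needed for moduli of sections are the points I expect to cost the most effort.

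Granting $\widetilde\Phi(\ascl_{\wl})=q^{\l}\scl_w$ for all $\wl\in W_{af}^-$, we get $\widetilde\Phi=\Phi$ as maps of $H_T(pt)$-modules; since $\widetilde\Phi$ is a ring homomorphism, so is $\Phi$, which is Theorem~\ref{main}. One could also shorten the Gromov--Witten computation by carrying out (i)--(iii) only for a set of algebra generators of $H^T(Gr_G)$ chosen among the Schubert classes, and then deducing $\widetilde\Phi(\ascl_{\wl})=q^{\l}\scl_w$ in general from the ring-homomorphism property of $\widetilde\Phi$ together with the quantum Chevalley formula in $QH_T(G/B)$ and the affine Chevalley/Pieri rule in $H^T(Gr_G)$; this variant, however, leans on those combinatorial identities, whereas the direct determination of all the invariants does not, which is presumably why the Fulton--Woodward-style computation is carried out in full.
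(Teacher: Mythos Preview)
Your plan matches the paper's architecture --- construct Savelyev's map geometrically, show it is a ring homomorphism, then compute it on the affine Schubert basis by a Fulton--Woodward-style argument --- but the step establishing the ring-homomorphism property is organised differently. You propose the two-input module $\mathcal{S}\colon H^T(Gr_G)\otimes QH_T(G/B)\to QH_T(G/B)$ and argue both $\mathcal{S}_{ab}=\mathcal{S}_a\circ\mathcal{S}_b$ and $\mathcal{S}_a=(\mathcal{S}_a(1))\star-$ via degeneration. The paper instead works only with the one-input map $\Phi_{SS}(a):=\mathcal{S}_a(1)$, passes by virtual localization to the $T$-fixed-point basis $\{\localb_\mu\}_{\mu\in\Q}$ of $H^T(Gr_G)$ over $\fof(H_T(pt))$, on which the Pontryagin product is simply $\localb_{\mu_1}\bulletsmall\localb_{\mu_2}=\localb_{\mu_1+\mu_2}$, and then quotes Iritani's theorem that equivariant Seidel elements for cocharacters multiply: $\Phi_{SS}(\localb_{\mu_1+\mu_2})=\Phi_{SS}(\localb_{\mu_1})\star\Phi_{SS}(\localb_{\mu_2})$. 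Your route is self-contained but needs two separate degenerations; the paper's is shorter and reuses an existing result. On the computational side the paper's main technical input, beyond the Fulton--Woodward transversality you correctly anticipate (opposite Schubert cells are Morse--Smale, so the $B^-$-equivariant evaluation map is automatically transverse to every $B^+$-equivariant cycle while remaining $T$-equivariant), is a direct proof that the section moduli $\MMw$ over the Bott--Samelson--Demazure--Hansen resolution are \emph{regular}: one reduces, by Borel fixed-point, to checking $H^1(C;u^*T\fib{f})=0$ for $T$-invariant stable maps, and this is verified by hand using that the differential of the resolution map surjects onto a specific piece of $T_{x_\mu}Gr_G$. Your outline (dimension count, Kleiman--Bertini, Bialynicki--Birula) is in the right spirit but underweights this step; generic moving alone will not give regularity of the section moduli compatible with the $T$-action.
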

\noindent A published proof, given by Lam and Shimozono \cite{LS}, is algebraic and combinatorial. In this paper, we present a geometric proof by taking $\Phi$ to be the algebro-geometric and $T$-equivariant version of a map constructed by Savelyev \cite{S_QCC} who generalized Seidel representations \cite{Seidel} from 0-cycles in $\ag$ to higher dimensional ones, and showing this map to have the desired form.

In the same paper, Lam and Shimozono also proved
\begin{theorem}\label{P}
A parabolic version of Theorem \ref{main} holds.
\end{theorem}
\noindent We will prove Theorem \ref{P} as well. Since even stating it requires a substantial number of Lie-theoretic notations, we postpone the statement to Section \ref{4d} where we prove the Borel and parabolic cases simultaneously. 

\begin{remark}\label{intrormk0} Savelyev has already computed his map partially. In \cite{S_Bott}, he showed that his map defined for $\PP^n$ is non-zero on each generator of $\pi_*(\O SU(n+1))\otimes\QQ$ which has degree $<2n$. In \cite{S_JDG}, he proved that for any $\wl\in W_{af}^-$ such that $w$ is the longest element of $W$, his map defined for $G/B$ sends $\ascl_{\wl}$ to $q^{\l}\scl_{w}$ plus some higher terms with respect to an \text{action functional} on the space of sections of Hamiltonian fibrations.
\end{remark}

\begin{remark}\label{intrormk1} The proof of Theorem \ref{main} given by Lam and Shimozono relies on the equivariant quantum Chevalley formula \cite{M} which is the $T$-equivariant generalization of another unpublished result of Peterson proved by Fulton and Woodward \cite{FW}. Although we do not apply this formula directly, we do apply the key idea of the proof: the transverse property between the Schubert cells and the opposite Schubert cells in $G/P$ which implies that the moduli spaces for all two-pointed Gromov-Witten invariants are simultaneously regular and $T$-equivariant, allowing us to count the elements of their zero-dimensional components easily.
\end{remark}

\begin{remark}\label{intrormk2} Unlike the proof by Lam and Shimozono, our proof of Theorem \ref{P} is independent of Peterson-Woodward's comparison formula \cite{Woodward} which expresses explicitly the Schubert structure constants for $QH^{\bl}(G/P)$ in terms of those for $QH^{\bl}(G/B)$. In fact, our work provides an alternative proof of this formula because it can be derived directly from Theorem \ref{P} as shown by Huang and Li \cite[Proposition 2.10]{HL}.
\end{remark}

\section*{Acknowledgements}
The first version of this paper, which already contains all key ideas, was written when the author was a PhD student at the Chinese University of Hong Kong. He would like to thank the referees for useful comments which help to improve the exposition substantially.
\section{Preliminaries}\label{2}
\subsection{Some notations} \label{2a}
Let $G$ be a simple simply-connected complex algebraic group and $T\subset G$ a maximal torus. Put $\gg:=\lie(G)$ and $\hh:=\lie(T)$. Denote by $R$ the set of roots associated to the pair $(\gg,\hh)$. We have the root space decomposition
\[\gg=\hh\oplus\bigoplus_{\a\in R}\gg_{\a}\]
where each $\gg_{\a}$ is a one-dimensional eigenspace with respect to the adjoint action of $\hh$. Denote by $W$ the Weyl group. Fix a fundamental system $\{\a_i\}_{i\in I}$ of $R$, where $I:=\{1,\ldots,r\}$. Denote by $R^+\subset R$ the set of positive roots spanned by the $\a_i$'s. We have two particular Borel subgroups $B^-$ and $B^+$ of $G$ containing $T$ with their Lie algebras equal to $\hh\oplus \bigoplus_{\a\in -R^+}\gg_{\a}$ and $\hh\oplus\bigoplus_{\a\in R^+}\gg_{\a}$, respectively.

Let $W_{af}:=W\ltimes\Q$ be the affine Weyl group  where $\Q:=\sum_{\a\in R}\ZZ\cdot\a^{\vee}\subset\hh$ is the lattice spanned by the coroots. Elements of $W_{af}$ are denoted by $\wl$ with $w\in W$ and $\l\in\Q$ (where $t_{\l}$ means the translation $x\mapsto x+\l$). Denote by $W_{af}^-$ the set of minimal length coset representatives in $W_{af}/W$. It is easy to see that the map $W_{af}^-\ra \Q$ defined by $\wl\mapsto w(\l)$ is bijective. 
\subsection{Flag varieties} \label{2b} Let $P$ be a parabolic subgroup of $G$ containing $B^+$. Define $R^+_P\subseteq R^+$ to be the subset such that
\[ \lie(P)=\lie(B^+)\oplus\bigoplus_{\a\in -R_P^+}\gg_{\a}\]
and $R_P:=R^+_P\cup (-R^+_P)$. Let $I_P\subseteq I$ be the set of $i\in I$ such that $\a_i\in R_P^+$. Define $\Q_P:=\sum_{\a\in R^+_P}\ZZ\cdot\a^{\vee}\subseteq\Q$. Denote by $W_P\subseteq W$ the subgroup generated by the simple reflections $s_{\a_i}$ with $i\in I_P$ and by $W^P$ the set of minimal length coset representatives in $W/W_P$. For any $v\in W^P$, define $y_v:=\dot{v}P\in G/P$ where $\dot{v}\in N(T)$ is any representative of $v$. Then $\{y_v\}_{v\in W^P}$ is the set of $T$-fixed points of $G/P$.

The $B^-$-orbits $B^{-}\cdot y_v\subseteq G/P,~v\in W^P$ are called the Schubert cells and the $B^+$-orbits $B^+\cdot y_v\subseteq G/P,~v\in W^P$ are called the opposite Schubert cells. Define the (opposite) Schubert classes
\begin{align*}
\scl_v:=&~ \pd \left[~\ol{B^{-}\cdot y_v}~\right]\in H_T^{2\ell(v)}(G/P)\\
\scl^v:=&~ \pd \left[~\ol{B^{+}\cdot y_v}~\right]\in H_T^{\dim_{\RR}(G/P)-2\ell(v)}(G/P).
\end{align*}
Then $\{\scl_v\}_{v\in W^P}$ and $\{\scl^v\}_{v\in W^P}$ are $H_T^{\bl}(\pt)$-bases of $H_T^{\bl}(G/P)$. 

\vspace{.5cm}
The following well-known fact is crucial to us.
\begin{lemma}\label{2blemma} Every Schubert cell intersects every opposite Schubert cell transversely. In particular, $\{\scl_v\}_{v\in W^P}$ and $\{\scl^v\}_{v\in W^P}$ are dual to each other with respect to $\int_{G/P}-\cup -$.
\end{lemma}
\begin{proof}
See e.g. \cite[Section 7]{FW}.
\end{proof}

It is also well-known that the closures of (resp. opposite) Schubert cells have $B^-$-equivariant (resp. $B^+$-equivariant) resolutions, e.g. the Bott-Samelson-Demazure-Hansen resolutions. See e.g. \cite[Section 2]{Brion} for the construction.
\begin{definition}\label{2bdef}
For each $v\in W^P$, fix a $B^+$-equivariant morphism
 \[ f_{G/P,v}:\G_v\ra G/P\]
which is the composition of a resolution $\G_v\ra\ol{B^+\cdot y_v}$ and the inclusion $\ol{B^+\cdot y_v}\hookrightarrow G/P$.
\end{definition}

We now recall the $T$-equivariant quantum cohomology of $G/P$. See e.g. \cite{Mirror, FP, KM} for more details. There are isomorphisms
\begin{equation}\label{2bisom}
H_2(G/P)\simeq\Q/\Q_P\simeq \bigoplus_{i=1}^k\ZZ\cdot\a^{\vee}_i
\end{equation}
where the first is defined as the dual of the composition of three isomorphisms:
\[ \left(\Q/\Q_P\right)^*\xrightarrow{\rho~\mapsto L_{\rho}} \Pic(G/P) \xrightarrow{c_1} H^2(G/P)\simeq H_2(G/P)^*. \]
Here, $L_{\rho}$ is the line bundle $G\times_P \CC_{-\rho}$ where $\CC_{-\rho}$ is the one-dimensional representation of weight $-\rho$ on which $P$ acts by forgetting the semi-simple and unipotent parts. Denote by $\ec\subset H_2(G/P)$ the semi-group of effective curve classes in $G/P$.  Under \eqref{2bisom}, $\ec$ corresponds to the semi-subgroup of $\Q/\Q_P$ generated by $\a^{\vee}_i$ with $i\in I\setminus I_P$.

Define the $T$-equivariant quantum cohomology of $G/P$
\[ QH_T^{\bl}(G/P):= H_T^{\bl}(G/P)\otimes\ZZ[q_i|~i\in I\setminus I_P].\] 
We grade $QH_T^{\bl}(G/P)$ by declaring each $q_i$ to have degree $2\sum_{\a\in R^+\setminus R^+_P}\a(\a^{\vee}_i)$. The $T$-equivariant quantum cup product $\star$ is a deformation of the $T$-equivariant cup product, defined by
\[\s_u\star \s_v := \sum_{w\in W^P}\sum_{\mb{d}} \left(\prod_{i\in I\setminus I_P} q_i^{d_i}\right)  \left(\int_{\MM_{0,3}(G/P,\beta_{\mb{d}})} \ev_1^*\s_u\cup \ev_2^*\s_v\cup  \ev_3^*\s^w \right) \s_w,\]  
where
\begin{enumerate}[(i)]
\item $\mb{d}=\{d_i\}_{i\in I\setminus I_P}$ runs over the set of $(I\setminus I_P)$-tuples of non-negative integers;

\item $\beta_{\mb{d}}\in \ec$ corresponds to $\sum_{i\in I\setminus I_P}d_i\a^{\vee}_i$ via the isomorphism \eqref{2bisom};

\item $\MM_{0,3}(G/P,\beta_{\mb{d}})$ is the moduli of genus-zero stable maps to $G/P$ of degree $\beta_{\mb{d}}$ with three marked points and 
\[\ev_1,\ev_2,\ev_3:\MM_{0,3}(G/P,\beta_{\mb{d}})\ra G/P\]
are the evaluation morphisms at these marked points; and

\item the integral $\int_{\MM_{0,3}(G/P,\beta_{\mb{d}})}$ is the $T$-equivariant integral.
\end{enumerate}

\noindent Then $(QH_T^{\bl}(G/P),\star)$ is a graded commutative $\RRR$-algebra. 

\subsection{Affine Grassmannian} \label{2c}
The affine Grassmannian $\ag$ of $G$ is by definition (see e.g. \cite[Section 1.2]{Zhu}) the functor
\[
\begin{array}{cccc}
& AffSch_{\CC} &\ra& Sets\\ [0.5em]
&\spec R&\mapsto & \left\{\begin{array}{l}
\text{isomorphism classes of }(\fibb^o,\nu^o)\text{ where}\\[.2em]
\fibb^o\text{ is a }G\text{-torsor over }\spec R[[z]], \\[.2em]
\nu^o:\fibb^o|_{\spec R((z))}\xrightarrow{\sim}\spec R((z))\times G\\[.2em]
\text{is a trivialization} 
\end{array} \right\}
\end{array}.
\]
(We use the notation $(\fibb^o,\nu^o)$ instead of a more natural one $(\fibb,\nu)$ because the latter is reserved for $G$-torsors over $\PP^1$.) It is well-known that $\ag$ is represented by an Ind-projective Ind-scheme. See e.g. \cite[Theorem 1.2.2]{Zhu}. By Beauville-Laszlo's theorem \cite[]{BL}, $\ag$ also represents the subfunctor
\[
\spec R\mapsto \left\{\begin{array}{l}
\text{isomorphism classes of }(\fibb^o,\nu^o)\text{ where}\\[.2em]
\fibb^o\text{ is a }G\text{-torsor over }\spec R[z], \\[.2em]
\nu^o:\fibb^o|_{\spec R[z,z^{-1}]}\xrightarrow{\sim}\spec R[z,z^{-1}]\times G\\[.2em]
\text{is a trivialization} 
\end{array} \right\}.
\]

Let $\mu\in\Q$. Denote by $t^{\mu}$ the $\spec\CC$-point of $\ag$ represented by the trivial $G$-torsor with trivialization $(z,g)\mapsto (z,\mu(z)g)$. One checks easily that it is a $T$-fixed point of $\ag$. It is known that if $G$ is simply-connected, every $T$-fixed point of $\ag$ is of this form. Define $\BB:=\ev_{z=0}^{-1}(B^-)$ where $\ev_{z=0}:G(\CC[[z]])\ra G$ is the evaluation map at $z=0$. For any $\mu\in\Q$, the orbit $\BB\cdot t^{\mu}$ is isomorphic to an affine space and we call it an affine Schubert cell. In this paper, it is more convenient to index affine Schubert cells by $W_{af}^-$ instead of $\Q$. (See Section \ref{2a} for the definition of $W_{af}^-$.) For any $\wl\in W_{af}^-$, we define the affine Schubert class
\[ \ascl_{\wl}:= \left[~\ol{\BB\cdot t^{\wll}}~\right]\in H^T_{2\ell(\wl)}(\ag).\]
Then $\{\ascl_{\wl}\}_{\wl\in W_{af}^-}$ is an $\RRR$-basis of the $T$-equivariant homology $H^T_{\bl}(\ag)$ of $\ag$.

Denote by $\LL G$ the loop group functor $\spec R\mapsto G(R((z)))$. We have a natural group action 
\[ \LL G\times\ag\ra \ag\]
defined by 
\begin{equation}\label{2caction}
\vp\cdot (\fibb^o,\nu^o):= (\fibb^o,\vp\cdot \nu^o)
\end{equation}
for any $\vp\in G(R((z)))$ and $\spec R$-point $[(\fibb^o,\nu^o)]$ of $\ag$, where $\vp\cdot\nu^o(p):=(x,\vp(z)g)\in\spec R((z))\times G$ for $\nu^o(p)=(x,g)$.

For any $\a\in R$ and $k\in\ZZ$, denote by $U_{\a,k}\subset\LL G$ the affine root group $\exp(z^k\gg_{\a})\simeq\mathbb{G}_a$.
\begin{definition}\label{2cgood}
Let $H$ be a subgroup of $G$. A morphism $f:\G\ra\ag$ from a variety $\G$ to $\ag$ is said to be $H$-good if $\G$ has an algebraic $H$-action and an algebraic $U_{\a,k}$-action for each $\a\in R$ and $k>0$ such that $f$ is equivariant with respect to these group actions. 
\end{definition}

\begin{lemma}\label{2clemma}
For any $\wl\in W_{af}^-$, there exists a $B^-$-good morphism $f:\G\ra\ag$ which is the composition of a resolution $\G\ra \ol{\BB\cdot t^{\wll}}$ and the inclusion $\ol{\BB\cdot t^{\wll}}\hookrightarrow \ag$.
\end{lemma} 
\begin{proof}
Notice that for $\mathcal{H}=B^-$ or $U_{\a,k}$ with $\a\in R$ and $k>0$, the $\mathcal{H}$-action on $\ag$ induces an $\mathcal{H}$-action on $\ol{\BB\cdot t^{\wll}}$ such that the inclusion $\iota:\ol{\BB\cdot t^{\wll}}\hookrightarrow \ag$ is $\mathcal{H}$-equivariant.

By \cite[Proposition 3.9.1 \& Theorem 3.26]{equivresolution}, there exists a resolution $r:\G\ra\ol{\BB\cdot t^{\wll}}$ such that every algebraic group action on $\ol{\BB\cdot t^{\wll}}$ lifts to an algebraic group action on $\G$. It follows that the composition $f:=\iota\circ r$ is a $B^-$-good morphism.

Alternatively, one can take a Bott-Samelson-Demazure-Hansen resolution of $\ol{\BB\cdot t^{\wll}}$. See e.g. \cite[Section 8]{BottSameslon} for the construction.
\end{proof}
\begin{definition}\label{2cdef}
For each $\wl\in W_{af}^-$, fix a $B^-$-good morphism \[ f_{\ag,\wl}:\G_{\wl}\ra \ag\] 
which is the composition of a resolution $\G_{\wl}\ra \ol{\BB\cdot t^{\wll}}$ and the inclusion $\ol{\BB\cdot t^{\wll}}\hookrightarrow \ag$.
\end{definition}

Let $K$ be a maximal compact subgroup of $G$ such that $T_K:=T\cap K$ is a maximal torus of $K$. Let $\O_{pol} K$ be the space of polynomial based loops in $K$. It is well-known that the canonical map $\O_{pol} K\ra \ag$ is a $T_K$-equivariant homeomorphism. See \cite[Theorem 1.6.1]{Zhu} for an exposition of the proof of this result and the references cited therein, namely \cite[Section 4]{Nadler} and \cite[Section 8.3]{Segal}. Notice that $\O_{pol} K$ is a group. Its group structure thus induces an $\RRR$-algebra structure on $H^T_{\bl}(\ag)$. It is called the Pontryagin product. By definition, we have
\begin{equation}\label{2cprod}
[t^{\mu_1}]\bulletsmall [t^{\mu_2}] = [t^{\mu_1+\mu_2}]\quad \text{ for any }\mu_1,\mu_2\in\Q.
\end{equation}
Since $\{[t^{\mu}]\}_{\mu\in\Q}$ is a basis of $H^T_{\bl}(\ag)\otimes_{\RRR}\fof(\RRR)$, these equalities determine the Pontryagin product completely.
\section{The Savelyev-Seidel homomorphism}\label{3}
\subsection{$G/P$-bundles} \label{3a}
Let $f:\G\ra\ag$ be a morphism where $\G$ is a variety. It is represented by a pair $(\fibb_f^o,\nu_f^o)$ where $\fibb_f^o$ is a $G$-torsor over $\AA^1_z\times\G$ and $\nu_f^o:\fibb_f^o|_{(\AA^1_z\setminus 0)\times \G}\xrightarrow{\sim} (\AA^1_z\setminus 0)\times \G\times G$ is a trivialization. To see this, take a covering $\{U_i\}_i$ of $\G$ by affine open subsets. By the definition of $\ag$, each $f|_{U_i}$ is represented by a pair $(\fibb^o_{f|_{U_i}},\nu^o_{f|_{U_i}})$. Since in general every pair $(\fibb^o,\nu^o)$ has no non-trivial automorphism (essentially due to the trivialization $\nu^o$), it follows that we can glue $(\fibb^o_{f|_{U_i}},\nu^o_{f|_{U_i}})$ to form the desired pair $(\fibb_f^o,\nu_f^o)$.

Identify $\AA^1_z$ with $\PP^1\setminus \infty$. Glue $\fibb_f^o$ and $(\PP^1\setminus 0)\times\G\times G$ using $\nu_f^o$. The resulting variety is a $G$-torsor over $\PP^1\times \G$ with a trivialization over $(\PP^1\setminus 0)\times\G$. We denote the $G$-torsor by $\fibb_f$ and the trivialization by $\nu_f$.

\begin{remark}
There is a parallel story in the analytic category. In \cite{Segal}, Pressley and Segal defined $\ag$ to be the based loop group $\O K$ with respect to various topologies and showed \cite[Theorem 8.10.2]{Segal} that there is a bijective correspondence between the set of holomorphic maps $f:\G\ra\ag$ and the set of  isomorphism classes of holomorphic principal $G$-bundles over $\PP^1\times\G$ with trivializations over $(\PP^1\setminus \{|z|\leqslant 1\})\times \G$.
\end{remark}

\begin{lemma}\label{3avar}
The associated fiber bundle
\[ \fib{f}:=\fibb_f\times_G G/P\]
exists as a variety. The canonical projection
\[ \pi_f: \fib{f}\ra\PP^1\times\G\]
is smooth and projective. In particular, $\fib{f}$ is smooth (resp. projective) if $\G$ is.
\end{lemma}
\begin{proof}
By the existence of a $G$-linearized ample line bundle on $G/P$ and the descent theory for quasi-coherent sheaves, $\fib{f}$ exists as a scheme. In fact, it is a closed subscheme of a projective bundle over $\PP^1\times\G$. In particular, $\fib{f}$ is separated and of finite type over $\CC$, and $\pi_f$ is projective. Observe that $\fib{f}$ becomes a trivial $G/P$-bundle after a faithfully flat base change. This implies that $\fib{f}$ is reduced, as it is the flat image of a reduced scheme; and that $\pi_f$ is smooth, as it becomes so after a faithfully flat base change. Finally, $\fib{f}$ is irreducible because $\pi_f$ is smooth and has irreducible base and geometric fibers.
\end{proof}

Let $\rho\in (\Q/\Q_P)^*$. Recall the $G$-linearized line bundle $L_{\rho}:=G\times_P\CC_{-\rho}$ on $G/P$. Let $\pr_2:\fibb_f\times G/P\ra G/P$ denote the canonical projection. Then $\pr_2^*L_{\rho}$ is naturally a $G$-linearized line bundle on $\fibb_f\times G/P$ with respect to the diagonal $G$-action, and hence it descends to a line bundle on $\fib{f}$ which we denote by $\LL_{\rho}$. It has a property that its restriction to every fiber of $\pi_f$ is isomorphic to $L_{\rho}$.
\begin{definition} \label{3asection} 
We call $\b\in H_2(\fib{f})$ a section class of $\fib{f}$ if $(\pi_f)_*\b= [\PP^1\times \g_0]$ for some $\g_0\in\G$.
\end{definition}

\begin{definition} \label{3afunction} 
Define a function 
\[ \degg: \{\text{section classes of }\fib{f}\} \ra \Q/\Q_P\]
characterized by the property that for any $\rho\in (\Q/\Q_P)^*$,
\[ \langle \b,c_1(\LL_{\rho})\rangle  = \langle \degg(\b), \rho\rangle.\]
\end{definition}
 
Let $H$ be a subgroup of $G$. Suppose $\G$ has an $H$-action. Then we have an obvious $H$-action on $(\PP^1\setminus 0)\times \G\times G$:
\begin{equation}\label{3aaction}
h\cdot (z,\g,g):= (z,h\cdot\g, hg).
\end{equation}
\begin{lemma}\label{3alemma}
Suppose $f$ is $H$-equivariant. Then the $H$-action on $\fibb_{f}|_{(\PP^1\setminus 0)\times \G}$ defined by \eqref{3aaction} via $\nu_f$ extends to $\fibb_f$ and hence defines an $H$-action on $\fib{f}$.
\end{lemma}
\begin{proof}
Denote by
\[ a:H\times \G\ra \G\quad\text{ and }\quad \pr_{\G}:H\times \G\ra \G\]
the action morphism and the canonical projection respectively. Consider the following two $H\times\G$-points of $\ag$:
\[ \left( (\id_{\AA^1_z}\times a)^*\fibb_f^o, (\id_{\AA^1_z}\times a)^*\nu_f^o\right)\quad\text{ and }\quad\left( (\id_{\AA^1_z}\times \pr_{\G})^*\fibb_f^o, \widetilde{\nu}_f^o\right) \]
where $\widetilde{\nu}_f^o$ is the trivialization of $(\id_{\AA^1_z}\times \pr_{\G})^*\fibb_f^o|_{(\AA^1_z\setminus 0)\times H\times \G}\simeq H\times \fibb_f^o|_{(\AA^1_z\setminus 0)\times \G}$ defined by 
\[ \widetilde{\nu}_f^o(h,p):=(h,z,\g,hg)\quad\text{for }~\nu_f^o(p)=(z,\g,g).\]
By the assumption that $f$ is $H$-equivariant, these two $H\times \G$-points are equal. In other words, there exists an isomorphism between the underlying $G$-torsors which is compatible with the underlying trivializations. Therefore, the composition
\[ H\times \fibb_f^o\simeq (\id_{\AA_z^1}\times\pr_{\G})^*\fibb_f^o\xrightarrow{\sim} (\id_{\AA^1_z}\times a)^*\fibb_f^o\ra \fibb_f^o\]
gives the desired extension, where the last arrow is the canonical projection.
\end{proof}
\subsection{Moduli of sections} \label{3b}
Let $f:\G\ra \ag$ be a morphism where $\G$ is a smooth projective variety. Then $\fib{f}$ is a smooth projective variety by Lemma \ref{3avar}. The subvariety $D_{f,\infty}:=\pi_f^{-1}(\infty\times \G)$ is a smooth divisor of $\fib{f}$ and is identified with $\G\times G/P$ via the trivialization $\nu_f$. Denote by $\iota_{f,\infty}:D_{f,\infty}\hookrightarrow \fib{f}$ the inclusion. 

\begin{definition} \label{3bmodulif} Let $\eta\in\Q/\Q_P$.
\begin{enumerate}
\item Define 
\[\MM(f,\eta):= \bigcup_{\b} ~\MM_{0,1}(\fib{f},\b)\times_{(\ev_1,\iota_{f,\infty})} D_{f,\infty}\]
where $\b$ runs over all section classes of $\fib{f}$ such that $\degg(\b)=\eta$ ($\degg$ is defined in Definition \ref{3afunction}).

\item Define 
\[\ev:\MM(f,\eta)\ra G/P\]
to be the composition
\[ \MM(f,\eta)\ra D_{f,\infty}\simeq \G\times G/P \ra G/P\]
of the morphism induced by $\ev_1$, the isomorphism induced by $\nu_f$ and the canonical projection.
\end{enumerate}
\end{definition}

\begin{lemma}\label{3bdim}
The virtual dimension of $\MM(f,\eta)$ is equal to $\dim\G +\dim G/P +\sum_{\a\in R^+\setminus R^+_P}\a(\eta)$.
\end{lemma}
\begin{proof}
Denote by $\vdim\MM(f,\eta)$ the virtual dimension of $\MM(f,\eta)$. Let $\b$ be a section class of $\fib{f}$ such that $\degg(\b)=\eta$. We have
\begin{equation}\label{3bdimeq1}
\vdim\MM(f,\eta)= \dim\fib{f} +\langle\b,c_1(\TT_{\fib{f}})\rangle -3.
\end{equation}
Since $\fib{f}$ is a $G/P$-bundle over $\PP^1\times\G$ and $\b$ is a section class, the equality \eqref{3bdimeq1} can be simplified to 
\[\vdim\MM(f,\eta)=\dim\G +\dim G/P +\langle\b,c_1(\TT^{vert}_{\pi_f})\rangle \]
where $\TT^{vert}_{\pi_f}$ is the vertical tangent bundle of $\pi_f$. 

It remains to show $\langle\b,c_1(\TT^{vert}_{\pi_f})\rangle=\sum_{\a\in R^+\setminus R_P^+}\a(\eta)$. Recall $\bigwedge^{top}\TT_{G/P}\simeq L_{\rho_P}$ as $G$-linearized line bundles, where $\rho_P:=\sum_{\a\in R^+\setminus R_P^+}\a$. It follows that $\bigwedge^{top}\TT^{vert}_{\pi_f}\simeq \LL_{\rho_P}$, and hence 
\[\langle\b,c_1(\TT^{vert}_{\pi_f})\rangle=\langle\b,c_1({\textstyle \bigwedge^{top}}\TT^{vert}_{\pi_f})\rangle=\langle\b,c_1(\LL_{\rho_P})\rangle=\langle\eta,\rho_P\rangle=\sum_{\a\in R^+\setminus R_P^+}\a(\eta).\]
\end{proof}

\begin{lemma}\label{3blemma}
Let $H$ be a subgroup of $G$. Suppose $\G$ has an $H$-action and $f$ is $H$-equivariant. Then for any $\eta\in\Q/\Q_P$, the stack $\MM(f,\eta)$ has a natural $H$-action such that $\ev$ is $H$-equivariant.
\end{lemma}
\begin{proof}
This follows immediately from Lemma \ref{3alemma}.
\end{proof}

\begin{definition} \label{3bmoduli} $~$
\begin{enumerate}
\item For any $\wl\in W_{af}^-$, define $\MMw$ to be the moduli space $\MM(f,\eta)$ in Definition \ref{3bmodulif} by taking $f=f_{\ag,\wl}$ (Definition \ref{2cdef}).

\item For any $\mu\in\Q$, define $\MMm$ to be the moduli space $\MM(f,\eta)$ in Definition \ref{3bmodulif} by taking $f= t^{\mu}$ (point map).
\end{enumerate}
\end{definition}

By Lemma \ref{3blemma}, $\MMw$ (resp. $\MMm$) has a natural $B^-$-action (resp. $T$-action) such that $\ev$ is $B^-$-equivariant (resp. $T$-equivariant). 
\subsection{Construction of the Savelyev-Seidel homomorphism} \label{3c}

\begin{definition}\label{3cdef}  Define an $\RRR$-linear map
\[
\begin{array}{cccc}
\Phi_{SS}: & H^T_{-\bl}(\ag) &\ra& QH_T^{\bl}(G/P)[q_i^{-1}|~i\in I\setminus I_P]\\ [0.5em]
&\ascl_{\wl} &\mapsto & \ds \sum_{v\in W^P}\sum_{\eta\in\Q/\Q_P} q^{\eta} \left(\int_{[\MMw]^{vir}}\ev^*\scl^v\right) \scl_v
\end{array}.
\]
\end{definition}

\begin{remark} \label{3crmk2} At this stage, we should take the coefficient ring to be $\QQ$. But we will prove at the end that we can actually take it to be $\ZZ$. See Theorem \ref{4dmain}.
\end{remark}

\begin{proposition}\label{3cring} $\Phi_{SS}$ is a graded homomorphism of $H_T^{\bl}(\pt)$-algebras
\end{proposition}

\begin{proof}
Let us first show that $\Phi_{SS}$ is graded. Let $\wl\in W_{af}^-$. Then $\ascl_{\wl}$ has degree $-2\ell(\wl)$ in $H^T_{-\bl}(\ag)$. By Lemma \ref{3bdim}, the integral $\int_{[\MMw]^{vir}}\ev^*\scl^v$ is non-zero only if 
\[ \dim G/P -\ell(v)=\ell(\wl)+\dim G/P +\sum_{\a\in R^+\setminus R^+_P}\a(\eta) .\]
By definition, $q^{\eta}$ has degree $2\sum_{\a\in R^+\setminus R^+_P}\a(\eta)$. This shows that $\Phi_{SS}(\ascl_{\wl})$ has degree $-2\ell(\wl)$ as desired.

It remains to show that $\Phi_{SS}$ is an algebra homomorphism. Define a $\fof(H_T^{\bl}(\pt))$-linear map
\begin{equation}\label{3cseidel}
\begin{array}{cccc}
\Phi_{SS}': & H^T_{-\bl}(\ag)_{loc} &\ra& QH_T^{\bl}(G/P)[q_i^{-1}|~i\in I\setminus I_P]_{loc}\\ [0.5em]
&[t^{\mu}] &\mapsto & \ds \sum_{v\in W^P}\sum_{\eta\in\Q/\Q_P} q^{\eta} \left(\int_{[\MMm]^{vir}}\ev^*\scl^v\right) \scl_v
\end{array}
\end{equation}
where the subscript $loc$ denotes the localization $-\otimes_{H_T^{\bl}(\pt)}\fof(H_T^{\bl}(\pt))$. By \eqref{2cprod} and Lemma \ref{3ciritani} below, it suffices to show 
\[ \Phi_{SS}(\ascl_{\wl}) = \Phi_{SS}'(\ascl_{\wl}) \]
for any $\wl\in W_{af}^-$. Put $\G:=\G_{\wl}$, the source of $f_{\ag,\wl}$. To simplify the exposition, assume $\G^T$ is discrete\footnote{This indeed suffices for our application because we can take $f_{\ag,\wl}$ to be a Bott-Samelson-Demazure-Hansen resolution which satisfies this assumption. See the proof of Lemma \ref{2clemma}.}. By the classical localization formula and the assumption that $f_{\ag,\wl}$ is the composition of a $T$-equivariant resolution $\G\ra \ol{\BB\cdot t^{\wll}}$ and the inclusion $\ol{\BB\cdot t^{\wll}}\hookrightarrow \ag$, we have
\[ \ascl_{\wl} = (f_{\ag,\wl})_*[\G]= \sum_{\g\in\G^T} \frac{1}{e^T(T_{\g}\G)} \localb{\mu_{\g}}\]
where $\mu_{\g}\in\Q$ satisfies $f_{\ag,\wl}\circ \g=t^{\mu_{\g}}$. (Here, $\g$ and $t^{\mu_{\g}}$ are viewed as morphisms from $\spec\CC$ to $\G$ and $\ag$ respectively.) Put $\MM:=\MMw$ and $\MM_{\g}:=\MM(f_{\ag,\wl}\circ\g,\eta)\simeq \MM(\mu_{\g},\eta)$. Let $\{F_{\g,j}\}_{j\in J_{\g}}$ be the set of components of the fixed-point substack $\MM_{\g}^T$. We have 
\[ \MM^T=\bigcup_{\g\in \G^T}\MM_{\g}^T=\bigcup_{\g\in\G^T}\bigcup_{j\in J_{\g}} F_{\g,j}.\]
Applying the virtual localization formula \cite{GP} twice, we get 
\[[\MM]^{vir}  = \sum_{\g\in \G^T} \sum_{j\in J_{\g}} \frac{[F_{\g,j}]^{vir}}{e^T(N^{vir}_{F_{\g,j}/\MM})} 
 = \sum_{\g\in \G^T} \frac{1}{e^T(T_{\g} \G)}\left( \sum_{j\in J_{\g}} \frac{[F_{\g,j}]^{vir}}{e^T(N^{vir}_{F_{\g,j}/\MM_{\g}})}  \right) 
= \sum_{\g\in \G^T} \frac{1}{e^T(T_{\g} \G)} [\MM_{\g}]^{vir}.\]
It follows that 
\begin{align*}
\Phi_{SS}(\ascl_{\wl}) &= \sum_{v\in W^P}\sum_{\eta\in\Q/\Q_P} q^{\eta} \left(\int_{[\MMw]^{vir}}\ev^*\scl^v\right) \scl_v\\
&= \sum_{v\in W^P}\sum_{\eta\in\Q/\Q_P} \sum_{\g\in\G^T} \frac{q^{\eta}}{ e^T(T_{\g}\G)} \left(\int_{[\MM_{\g}]^{vir}}\ev^*\scl^v\right) \scl_v\\
&= \sum_{\g\in\G^T} \frac{1}{ e^T(T_{\g}\G)}\sum_{v\in W^P}\sum_{\eta\in\Q/\Q_P} q^{\eta} \left(\int_{[\MM(\mu_{\g},\eta)]^{vir}}\ev^*\scl^v\right) \scl_v\\
&= \sum_{\g\in\G^T} \frac{1}{ e^T(T_{\g}\G)} \Phi_{SS}'([t^{\mu_{\g}}])\\
&= \Phi_{SS}'\left( \sum_{\g\in\G^T} \frac{1}{e^T(T_{\g}\G)} \localb{\mu_{\g}} \right) = \Phi_{SS}'(\ascl_{\wl})
\end{align*}
as desired.
\end{proof}

\begin{remark} 
In the proof of Proposition \ref{3cring}, we have used the fact that $\pi_*[X]=[Y]\in H^T_{BM,\dim_{\RR}Y}(Y)$ for any $T$-equivariant proper birational morphism $\pi:X\ra Y$ between possibly singular $T$-varieties over $\CC$. For reader's convenience, we provide a proof. 

Let us first deal with the non-equivariant case. Since $\pi$ is proper, the pushforward map $\pi_*:A_{\bl}(X)\ra A_{\bl}(Y)$ between Chow groups exists. Since $\pi$ is birational, it has degree one, and hence $\pi_*[X]=[Y]$ where $[X]\in A_n(X)$ and $[Y]\in A_n(Y)$ ($n:=\dim_{\CC}X=\dim_{\CC}Y$) are the fundamental cycles. The desired equality (in Borel-Moore homology) now follows from this equality, the existence of the cycle map $c\ell:A_{\bl}(-)\ra H_{BM,2\bl}(-)$ and the fact that $c\ell$ commutes with $\pi_*$. See \cite[Chapter 17]{AF} or \cite{Fulton} for more details.

For the equivariant case, apply the above result to the morphism $X\times^T U\ra Y\times^T U$ for a suitable finite dimensional approximation $U\ra U/T$ of the classifying bundle $ET\ra BT$. See \cite[Section 2.2]{EG} for more details.
\end{remark}

\begin{lemma}\label{3ciritani}
The map $\Phi_{SS}'$ defined in \eqref{3cseidel} satisfies
\begin{equation}\label{3ceqn2}
\Phi_{SS}'(\localb{\mu_1+\mu_2})= \Phi_{SS}'(\localb{\mu_1})\star\Phi_{SS}'(\localb{\mu_2})
\end{equation}
for any $\mu_1, \mu_2\in\Q$.
\end{lemma}
\begin{proof}
Notice that each $\Phi_{SS}'(\localb{\mu})$ is a $T$-equivariant Seidel element. Seidel elements are originally introduced by Seidel in \cite{Seidel}. Their $T$-equivariant generalizations are introduced in \cite{BMO, Iritani, MO, OP} in algebraic geometry and in \cite{Mak, LJ} in symplectic geometry.

Consider the one $S_{\mu}(0):=S_{\mu}(\tau)|_{\tau = 0}$ defined by Iritani in \cite[Definition 3.17]{Iritani}. (More precisely, what he defined are $T$-equivariant big Seidel elements. Since we are dealing with $T$-equivariant small Seidel elements, we put $\tau =0$.) In terms of our notations, we have
\[ S_{\mu}(0):= \sum_{v\in W^P}\sum_{\eta\in\Q/\Q_P} q^{\eta - \degg([u^{min}_{\mu}])} \left(\int_{[\MMm]^{vir}}\ev^*\scl^v\right) \scl_v= q^{-\degg([u^{min}_{\mu}])}\Phi_{SS}'(\localb{\mu})\]
where $u^{min}_{\mu}$ is a minimal section of $\fib{t^{\mu}}$ which is defined between Lemma 3.5 and Lemma 3.6 in \textit{op. cit.} and $\degg$ is the function defined in Definition \ref{3afunction} in the present paper.

By the discussion following \cite[Definition 3.17]{Iritani}, we have 
\[ q^{\degg( [u^{min}_{\mu_1+\mu_2}] - [u^{min}_{\mu_1}]\#[u^{min}_{\mu_2}] )} S_{\mu_1+\mu_2}(0) = S_{\mu_1}(0)\star S_{\mu_2}(0)\]
where $[u^{min}_{\mu_1}]\#[u^{min}_{\mu_2}]$ is the section class of $\fib{t^{\mu_1+\mu_2}}$ obtained by gluing the sections $u^{min}_{\mu_1}$ and $u^{min}_{\mu_2}$ through the following ``degeneration'' (see the proof of \cite[Corollary 3.16]{Iritani})
\[ \fibb_{\mu_1,\mu_2} := \left( (\AA^2_{a_1,a_2}\setminus 0)\times (\AA^2_{b_1,b_2}\setminus 0) \times G/P \right)/\mathbb{G}_m\times\mathbb{G}_m \]
Here the $\mathbb{G}_m\times\mathbb{G}_m$-action is defined by 
\[ (z_1,z_2)\cdot ((a_1,a_2),(b_1,b_2),y) := ((z_1^{-1}a_1,z_1^{-1}a_2),(z_2^{-1}b_1,z_2^{-1}b_2),\mu_1(z_1)\mu_2(z_2)\cdot y).\] 
(We call $\fibb_{\mu_1,\mu_2}$ a degeneration because it is a $G/P$-bundle over $\PP^1\times\PP^1$ and satisfies
\[ \fibb_{\mu_1,\mu_2}|_{\PP^1\times [1:0]} \simeq \fib{t^{\mu_1}}, \quad \fibb_{\mu_1,\mu_2}|_{\PP^1\times [0:1]} \simeq \fib{t^{\mu_2}}\quad\text{and}\quad \fibb_{\mu_1,\mu_2}|_{\Delta} \simeq \fib{t^{\mu_1+\mu_2}}\]
where $\Delta\subset \PP^1\times \PP^1$ is the diagonal.)

The equality \eqref{3ceqn2} will be proved if we can show
\[\degg([u^{min}_{\mu_1}]\#[u^{min}_{\mu_2}]) = \degg([u^{min}_{\mu_1}]) +\degg([u^{min}_{\mu_2}]) . \]
This follows from the observation that for each $\rho\in (\Q/\Q_P)^*$, the line bundle
\[ \left( (\AA^2_{a_1,a_2}\setminus 0)\times (\AA^2_{b_1,b_2}\setminus 0) \times L_{\rho} \right)/\mathbb{G}_m\times\mathbb{G}_m \]
on $\fibb_{\mu_1,\mu_2}$ restricts to $\LL_{\rho}$ over $\fibb_{\mu_1,\mu_2}|_{\PP^1\times [1:0]} $, $\fibb_{\mu_1,\mu_2}|_{\PP^1\times [0:1]} $ and $\fibb_{\mu_1,\mu_2}|_{\Delta}$.
\end{proof}

\begin{remark} The author of the present paper did not know Iritani's result until he read a paper of Gonz\'{a}lez, Mak and Pomerleano \cite{Mak}. In the original version, Lemma \ref{3ciritani} was proved using Li's degeneration formula \cite{Li1, Li2}. The degeneration used was essentially the fiber bundle $\fibb_{\mu_1,\mu_2}$ constructed by Iritani. Notice however that  Iritani's proof does not rely on the degeneration formula but virtual localization formula.
\end{remark}
\section{Proof of main result}\label{4}
\subsection{$T$-invariant sections} \label{4a}
Let $\mu\in\Q$. Recall $\fib{t^{\mu}}$ is the $G/P$-bundle $\fib{f}$ where we take $f=t^{\mu}$. By definition, we have
\begin{equation}\label{4afib}
\fib{t^{\mu}}\simeq \left(\AA^1_z\times G/P\times\{0,\infty\}\right)/_{(z,y,0)~\sim~ (z^{-1},\mu(z)\cdot y,\infty)}.
\end{equation}
\noindent Every $v\in W^P$ gives rise to a $T$-invariant section $u_{\mu,v}$ of $\fib{t^{\mu}}$ defined by
\[ u_{\mu,v}([z_1:z_2]) := [z_1/z_2,y_v,0]=[z_2/z_1,y_v,\infty ],~\quad [z_1:z_2]\in\PP^1.\]
It is easy to see that all $T$-invariant sections of $\fib{t^{\mu}}$ arise in this way.
 
Let $v\in W^P$. By linearizing the $G$-action on $G/P$ at $y_v$, we obtain an isomorphism
\[ T_{y_v}(G/P)\simeq \bigoplus_{\a\in -v(R^+\setminus R_P^+)}\gg_{\a}\]
of $T$-modules.

\begin{lemma} \label{4asectiondeg} Let $\TT^{vert}$ be the vertical tangent bundle of the $G/P$-bundle $\fib{t^{\mu}}\ra\PP^1$. Then $u_{\mu,v}^*\TT^{vert}$ is defined by the transition matrix
\[ A(z):= \sum_{\a\in -v(R^+\setminus R_P^+)} z^{\a(\mu)}\id_{\gg_{\a}}\in\eend(T_{y_v}(G/P))[z,z^{-1}]. \]
In particular, we have
\[u_{\mu,v}^*\TT^{vert}\simeq \bigoplus_{\a\in -v(R^+\setminus R_P^+)}\OO_{\PP^1}(-\a(\mu)).\]
\end{lemma}
\begin{proof} This follows from the explicit construction \eqref{4afib} of $\fib{t^{\mu}}$.
\end{proof}

Recall the function $\degg$ defined in Definition \ref{3afunction}.
\begin{lemma}\label{4asectionclass} For any $\mu\in\Q$ and $v\in W^P$, we have $\degg([u_{\mu,v}]) = v^{-1}(\mu) + \Q_P\in \Q/\Q_P$. 
\end{lemma}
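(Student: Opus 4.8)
The plan is to compute the integer $\langle \degg([u_{\mu,v}]), c_1(L_\rho)\rangle$ for an arbitrary $\rho\in(\Q/\Q_P)^*$ directly from the defining property in Definition \ref{3afunction}, namely by computing $\deg(u_{\mu,v}^*\LL_\rho)$, and then matching the resulting linear functional on $(\Q/\Q_P)^*$ with the element $v^{-1}(\mu)+\Q_P$. Concretely, pairing $v^{-1}(\mu)+\Q_P$ with $c_1(L_\rho)=\rho$ (under the identifications of Section \ref{2b}) gives $\rho(v^{-1}(\mu))=(v\rho)(\mu)$, so it suffices to prove $\deg(u_{\mu,v}^*\LL_\rho)=\langle v\rho,\mu\rangle$ for every $\rho$. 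Since both sides are additive in $\rho$ and $\LL_\rho$ depends on $\rho$ through its construction in Section \ref{3a}, it is enough to do this computation fibrewise using the explicit gluing in Lemma \ref{3armk}.

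First I would unwind the line bundle $\LL_\rho$ on $\fib{\mu}$ via the isomorphism \eqref{3aisom}: over each $\CC\times G/P$ chart it is $\mathrm{pr}_2^*L_\rho$, and the two charts are glued over $\CC^*$ by the cocycle $(z,y)\sim(z^{-1},x_\mu(z)\cdot y)$ together with the induced action on the fibre $\CC_{-\rho}$ of $L_\rho$. Restricting to the section $u_{\mu,v}(z)=(z,y_v)$, the bundle $u_{\mu,v}^*\LL_\rho$ is the line bundle on $\PP^1$ obtained by gluing the trivial bundle on the two copies of $\CC$ along $\CC^*$ by the transition function $z\mapsto$ (the scalar by which $x_\mu(z)$ acts on the fibre of $L_\rho$ over $y_v=vP$). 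That scalar is, up to sign in the exponent, $z^{\langle \rho, v^{-1}(\mu)\rangle}$: indeed $L_\rho=G\times_P\CC_{-\rho}$, the fibre over $vP$ is $\CC_{-v\rho}$ as a $T$-module, and $x_\mu(z)=z^\mu\in T$ acts on it by $z^{-\langle v\rho,\mu\rangle}$ (the exact sign/convention being fixed by how $L_\rho$ was normalized so that $c_1(L_\rho)$ corresponds to $\rho$). The degree of a line bundle on $\PP^1$ glued from trivial pieces by $z\mapsto z^k$ is $k$ (or $-k$, depending on which chart is which), so $\deg(u_{\mu,v}^*\LL_\rho)=\langle v\rho,\mu\rangle$ after matching conventions.

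The main obstacle is purely bookkeeping: tracking all the sign and inversion conventions so that the final answer is $v^{-1}(\mu)+\Q_P$ and not $-v^{-1}(\mu)+\Q_P$ or $v(\mu)+\Q_P$. There are several places where a sign or an inverse can slip in — the gluing $z\mapsto z^{-1}$ between the $0$- and $\infty$-charts, the definition of $L_\rho$ with weight $-\rho$, the orientation implicit in identifying $c_1(L_\rho)$ with $\rho\in(\Q/\Q_P)^*$ and $\pi_2(G/P)$ with $\Q/\Q_P$, and the convention for which point of $\PP^1$ is $0$ versus $\infty$. A clean way to pin these down is to sanity-check against the known special cases: $v=e$ should give $\degg([u_{\mu,e}])=\mu+\Q_P$, and $\mu=0$ should give the class of a constant section, i.e.\ $0$. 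I would verify the computation in the rank-one model $G/P=\PP^1$ with $\mu$ a coroot to fix every sign once and for all, then invoke equivariance/naturality to conclude in general. Besides the conventions, the only substantive input is Lemma \ref{4asectiondeg}'s underlying fact that $T_{y_v}(G/P)$ has $T$-weights $-v(R^+\setminus R_P^+)$, which is what makes the fibrewise $T$-action explicit; everything else is the standard computation of the degree of a line bundle on $\PP^1$ from its transition function.
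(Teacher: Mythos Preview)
Your proposal is correct and follows exactly the paper's approach: compute $\deg(u_{\mu,v}^*\LL_\rho)$ for arbitrary $\rho\in(\Q/\Q_P)^*$ via the characterizing property in Definition~\ref{3afunction}, obtain $\rho(v^{-1}(\mu))$, and conclude by varying $\rho$. The paper compresses your transition-function computation into the phrase ``easily seen''; your only superfluous remark is the appeal to Lemma~\ref{4asectiondeg} at the end, since what you actually use is the $T$-weight of the fibre of $L_\rho$ at $y_v$, not the tangent weights.
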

\begin{proof}
Write $\degg([u_{\mu,v}]) = \eta + \Q_P$. Let $\rho\in (\Q/\Q_P)^*$. By definition, $\rho(\eta)$ is the degree of the line bundle $u_{\mu,v}^*\LL_{\rho}$. From the explicit construction \eqref{4afib} of $\fib{t^{\mu}}$ and the definition of $\LL_{\rho}$, we see that $u_{\mu,v}^*\LL_{\rho}$ is defined by the transition matrix $-\rho(v^{-1}(\mu))$. It follows that the degree is equal to $\rho(v^{-1}(\mu))$. Since $\rho$ is arbitrary, the result follows. 
\end{proof}
\subsection{Regularity of the moduli} \label{4b}
Recall the key reason for $\MM_{0,n}(G/P,\b)$ to be regular is that $G/P$ is \text{convex}, i.e.
\begin{equation}\label{4bh1}
H^1(C;u^*\TT_{G/P})=0
\end{equation}
for any morphism $u:C\ra G/P$ where $C$ is a genus zero nodal curve. Surprisingly, $\fib{f_{\ag,\wl}}$ also satisfies this property, provided the morphisms in question represent section classes. The goal of this subsection is to prove this fact. First, we show that it suffices to verify the analogue of \eqref{4bh1} for a smaller class of $u$. In what follows, $C$ always denotes a genus zero nodal curve.
\begin{definition}\label{4bTfix}
Let $X$ be a variety with a $T$-action. A morphism $u:C\ra X$ is said to be \text{$T$-invariant} if for any $t\in T$ there exists an automorphism $\phi:C\ra C$ such that $t\cdot u=u\circ\phi$. 
\end{definition}

\begin{lemma}\label{4blemma} 
Let $X$ be a smooth projective variety with a $T$-action and $\b\in H_2(X)$. Suppose for any $T$-invariant morphism $u:C\ra X$ representing $\b$, we have $H^1(C;u^*\TT_X)=0$. Then the same is true for any morphism representing $\b$.
\end{lemma}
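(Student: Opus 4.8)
The plan is to reduce the vanishing of $H^1(C;u^*TX)$ for an arbitrary curve $C$ and morphism $u$ representing $\b$ to the case of a $T$-invariant morphism, by deforming $u$ inside the moduli space $\overline{M}_{0,0}(X,\b)$ (or within a suitable family) until it becomes $T$-invariant, while keeping the relevant cohomology group under control via semicontinuity. First I would observe that $H^1$ is upper-semicontinuous in flat families, so if we can connect $u$ to a $T$-invariant morphism $u_0$ through a connected family all of whose members have $H^1$ at least as big as that of the special fiber, we would get the wrong direction; hence instead I would use the Bialynicki-Birula / Morse-theoretic decomposition of $\overline{M}_{0,0}(X,\b)$ induced by a generic one-parameter subgroup $\lambda:\CC^*\to T$. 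The key point is that the limit $u_0:=\lim_{t\to 0}\lambda(t)\cdot u$ exists in the projective moduli space and is automatically $T$-invariant (its class in the moduli stack is fixed by $\lambda$, and since $\lambda$ is generic, fixed by all of $T$). So every point of the moduli space specializes, under the $\CC^*$-flow, to a $T$-fixed point, represented by a $T$-invariant stable map.

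Next I would invoke semicontinuity along this specialization: consider the universal curve over a disc (or over $\mathbb{A}^1$) mapping to $X$, whose generic fiber is $(C,u)$ and whose central fiber is $(C_0,u_0)$ with $u_0$ $T$-invariant. Pulling back $TX$ and pushing forward, $h^1(C_t; u_t^*TX)$ is upper-semicontinuous, so $h^1(C;u^*TX)\le h^1(C_0;u_0^*TX)$. By hypothesis the right-hand side is zero, hence so is the left-hand side. This is the whole argument in outline; the work is in making the family precise. One clean way is: take the map $\CC^*\to \overline{M}_{0,0}(X,\b)$, $t\mapsto \lambda(t)\cdot[u]$, extend it over $0$ by properness of the moduli space (the limit being $[u_0]$), pull back the universal stable map, and apply the semicontinuity theorem to the universal curve, which is flat and proper over the base. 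Since $[u_0]$ is $\lambda$-fixed and $\lambda$ is chosen so that its fixed locus in $\overline{M}_{0,0}(X,\b)$ equals the $T$-fixed locus (possible because there are only finitely many fixed components, so one avoids the finitely many "bad" one-parameter subgroups), $u_0$ is $T$-invariant in the sense of Definition \ref{4bTfix}.

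The one subtlety I expect to be the main obstacle — and which must be handled carefully — is that a priori the flat limit $(C_0, u_0)$ in $\overline{M}_{0,0}(X,\b)$ might be a stable map with contracted components or extra nodes, so one should make sure the semicontinuity statement is applied to $u_0^*TX$ on the actual (possibly more degenerate) nodal curve $C_0$, and that Definition \ref{4bTfix} is formulated for arbitrary genus zero nodal $C$, which it is. Also one must confirm that taking the limit along a generic $\lambda$ indeed lands in the $T$-fixed locus and not merely the $\lambda$-fixed locus; this is where genericity of $\lambda$ is used, together with the fact that $X$ has finitely many $T$-fixed points (here $G/P$ does, and more generally $\fib{\wl}$ has isolated or at least finitely many fixed components) so the fixed loci of all but finitely many subtori coincide. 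Modulo these points, the argument is a standard application of properness of Kontsevich's moduli space plus upper-semicontinuity of cohomology.
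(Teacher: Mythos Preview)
Your argument is correct and is essentially the paper's proof; the only difference is packaging. The paper shows the locus $V=\{[u]:H^1(C;u^*TX)=0\}$ is open in the (coarse) moduli $\overline{M}_{0,n}(X,\b)$ by semicontinuity, then applies the Borel fixed-point theorem to the closed $T$-invariant complement $\overline{M}\setminus V$ to produce a $T$-invariant $u_0$ with $H^1\neq 0$, contradicting the hypothesis --- whereas you flow each $[u]$ by a generic one-parameter subgroup to a $T$-fixed limit $[u_0]$ and compare $h^1$ along the specialization, which is exactly the standard proof of Borel's theorem for tori inlined into the same semicontinuity step.
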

\begin{proof}
For a given morphism, choose $n\in\ZZ_{\geqslant 0}$ such that it becomes stable after adding $n$ marked points to its domain. Let $\ol{M}:=\ol{M}_{0,n}(X,\b)$ be the coarse moduli space of stable maps to $X$ with $n$ marked points and representing $\b$. This space is constructed and proved to be projective in \cite[Theorem 1]{FP}. Denote by $V$ the set of $[u]\in\ol{M}$ such that $H^1(C;u^*\TT_X)=0$. We have to prove $\ol{M}=V$. Notice that $T$ preserves $V$, and hence its complement $\ol{M}\setminus V$. Let us assume for a while $V$ is open so that $\ol{M}\setminus V$ is closed. Suppose $\ol{M}\setminus V\ne\emptyset$. By Borel fixed-point theorem, $\ol{M}\setminus V$ contains a $T$-fixed point $[u_0]$. Then $u_0$ is $T$-invariant and $H^1(C_0;u_0^*\TT_X)\ne 0$, in contradiction to our assumption stated in the lemma. Therefore, $\ol{M}=V$, as desired.

It remains to verify that $V$ is open. Recall \cite[Section 3 \& 4]{FP} $\ol{M}$ is a union of open subschemes each of which is a finite group quotient of the fine moduli $U$ of stable maps to $X$ with stable domains, representing $\b$ and satisfying a condition depending on a fixed set of generic Cartier divisors on $X$. For each $U$, consider its universal family $\pi:\mathcal{C}\ra U$ and evaluation map $\ev:\mathcal{C}\ra X$. Since $\pi$ is flat and $\ev^*\TT_X$ is locally free, the set $U'$ of $x\in U$ for which $H^1(\mathcal{C}_x;\ev^*\TT_X|_{\mathcal{C}_x})=0$ is open, by the semi-continuity theorem. Then $U'$ descends to an open subset $U''$ of $V$. The proof is complete by varying $U$ and taking the union of $U''$. 
\end{proof}

\begin{proposition} \label{4bmain} Let $\G$ be a smooth projective variety and $f:\G\ra\ag$ a morphism which is $T$-good (see Definition \ref{2cgood}). Then for any morphism $u:C\ra \fib{f}$ which represents a section class of $\fib{f}$, we have $H^1(C;u^*\TT_{\fib{f}})=0$.
\end{proposition}
\begin{proof}
Since $f$ is $T$-good, $\fib{f}$ has a $T$-action by Lemma \ref{3alemma}, and hence, by Lemma \ref{4blemma}, we may assume $u$ is $T$-invariant.

Consider the composition
\[ \pr_{\G}\circ\pi_f\circ u:C\ra \fib{f}\ra \PP^1\times \G \ra \G.\]
Since $u$ represents a section class, we have $(\pr_{\G}\circ\pi_f\circ u)_*[C]=0$. But $\G$ is projective so $\pr_{\G}\circ\pi_f\circ u$ is constant, and hence there exists a factorization
\[ u:C\xrightarrow{u'}\fib{f\circ\g}\xhookrightarrow{\iota}\fib{f}\]
for some morphisms $\g:\spec\CC\ra\G$ and $u':C\ra\fib{f\circ\g}$ where $\iota$ is the canonical inclusion.

Consider next the composition
\[ \pr_{\PP^1}\circ\pi_{f\circ \g}\circ u':C\ra \fib{f\circ\g}\ra \PP^1\times \spec\CC \xrightarrow{\sim} \PP^1.\]
Since $u$ represents a section class, we have $(\pr_{\PP^1}\circ\pi_{f\circ \g}\circ u')_*[C]=[\PP^1]$. It follows that we can write $C=C_0\cup C_1$ where $C_0\simeq \PP^1$ is an irreducible component of $C$ and $C_1$ is the union of the other irreducible components, such that $u'|_{C_0}$ is a section of $\fib{f\circ\g}$ after reparametrizing $C_0$ and $u'|_{C_1}$ factors through a finite union of the fibers of $\pi_{f\circ \g}$.

Let us first deal with the case where $C_1$ is absent. In what follows, we will identify $C_0$ with $\PP^1$ and assume $u'=u'|_{C_0=\PP^1}$ is a section of $\fib{f\circ\g}$. Define $\FF:=u^*\TT^{vert}_{\pr_{\PP^1}\circ\pi_f}$ where $\TT^{vert}_{\pr_{\PP^1}\circ\pi_f}$ is the vertical tangent bundle of the fiber bundle:
\[ \pr_{\PP^1}\circ\pi_f :\fib{f}\ra \PP^1\times\G\ra \PP^1. \]
Since $H^1(\PP^1;\TT_{\PP^1})=0$, it suffices to verify $H^1(\PP^1;\FF)=0$. Define $\FF':= u^*\TT^{vert}_{\pi_f}$ where $\TT^{vert}_{\pi_f}$ is the vertical tangent bundle of $\pi_f$. We have an exact sequence of coherent sheaves over $C_0=\PP^1$:
\begin{equation}\label{4bexact}
 0\ra\FF'\ra\FF\ra T_{\g}\G\otimes_{\CC}\OO_{\PP^1}\ra 0
\end{equation}
where the morphism $\FF\ra T_{\g}\G\otimes_{\CC}\OO_{\PP^1}$ is given by the projection. By looking at the associated long exact sequence, it suffices to show
\begin{equation}\label{4bneed}
 \dim H^1(\PP^1;\FF') \leqslant \dim\coker(H^0(\PP^1;\FF)\ra T_{\g}\G). 
\end{equation}

Let us look at $\FF'$ closely. Since $u$ is $T$-invariant, we have $\g\in \G^T$, and so $f\circ\g=t^{\mu}$ for some $\mu\in\Q$. By the discussion in Section \ref{4a}, we have $u'=u_{\mu,v}$ for some $v\in W^P$ (after identifying $\fib{f\circ\g}$ with $\fib{t^{\mu}}$). Put $R_v:=-v(R^+\setminus R^+_P)$. Then by Lemma \ref{4asectiondeg}, $\FF'$ is defined by the transition matrix
\begin{equation}\label{4badded1}
 A(z):= \sum_{\a\in R_v} z^{\a(\mu)}\id_{\gg_{\a}}\in\eend(T_{y_v}(G/P))[z,z^{-1}]
\end{equation}
and in particular $\FF'\simeq\bigoplus_{\a\in R_v}\OO_{\PP^1}(-\a(\mu))$. (Recall we have identified $T_{y_v}(G/P)$ with $\bigoplus_{\a\in R_v}\gg_{\a}$ via the linearization of the $G$-action on $G/P$ at $y_v$.) Since for any $m\in\ZZ$ 
\[ \dim H^1(\PP^1,\OO(m))=\#\{k\in\ZZ|-m>k>0\}, \]
it follows that 
\begin{equation}\label{4badded2}
\dim H^1(\PP^1;\FF') = \#\{ (\a,k)\in R_v\times\ZZ|~\a(\mu)>k>0\} .
\end{equation}

Let us now look at $\FF$. By \eqref{4bexact}, $\FF$ is defined by a transition matrix of the form 
\[ 
\begin{bmatrix}
A(z)& B(z)\\
0&\id
\end{bmatrix}
\]
for some $B(z)\in \ehom(T_{\g}\G,T_{y_v}(G/P))[z,z^{-1}]$. It follows that every element of $H^0(\PP^1;\FF)$ is given by a pair of polynomial maps
\[ u_1:\AA^1\ra T_{y_v}(G/P)\quad\text{ and }\quad u_2:\AA^1 \ra T_{\g}\G\]
such that the Laurent polynomials
\[ A(z)u_1(z)+B(z)u_2(z)\quad \text{ and }\quad u_2(z)\]
are polynomials in $z^{-1}$. It is clear that $u_2(z)\equiv\zeta$ for some constant $\zeta\in T_{\g}\G$. Write $u_1(z)=\sum_{\a\in R_v} u_{1,\a}(z)$ where $u_{1,\a}:\AA^1\ra\gg_{\a}$; and $B(z)=\sum_{\a\in R_v}\sum_{k\in\ZZ}z^k B_{\a,k}$ where $B_{\a,k}:T_{\g}\G\ra \gg_{\a}$ is linear. The above condition for $A(z)u_1(z)+B(z)u_2(z)$ is equivalent, given $u_2(z)\equiv\zeta$, to the one that for any $\a\in R_v$, the Laurent polynomial
\begin{equation}\label{4bexpression1}
z^{\a(\mu)}u_{1,\a}(z)+\sum_{k\in\ZZ}z^kB_{\a,k}(\zeta)
\end{equation}
is a polynomial in $z^{-1}$. Since $z^kB_{\a,k}(\zeta)$ cannot cancel any term from $z^{\a(\mu)}u_{1,\a}(z)$ for any $k$ such that $\a(\mu)>k$, the above condition for \eqref{4bexpression1} implies that for any $\a\in R_v$ and $\a(\mu)>k>0$, we have $B_{\a,k}(\zeta)=0$. 

Define $h$ to be the composition
\begin{equation}\label{4bexpression15}
T_{\g}\G\xrightarrow{B(z)} T_{y_v}(G/P)[z,z^{-1}]\simeq \bigoplus_{\substack{\a\in R_v\\ k\in\ZZ}}z^k\gg_{\a}\ra \bigoplus_{\substack{ \a\in R_v\\ \a(\mu)>k>0}}z^k\gg_{\a}
\end{equation}
where the last arrow is the canonical projection. The discussion in the last paragraph implies that the composition
\begin{equation}\label{4bexpression2}
H^0(\PP^1;\FF)\ra T_{\g}\G\xrightarrow{h} \bigoplus_{\substack{ \a\in R_v\\ \a(\mu)>k>0}}z^k\gg_{\a}
\end{equation}
is zero. By Lemma \ref{4bsurj} below which says that $h$ is surjective, we have
\begin{equation}\label{4bexpression3}
\#\{(\a,k)\in R_v\times\ZZ|~\a(\mu)>k>0\}=\dim(\text{RHS of }\eqref{4bexpression2})\leqslant\dim\coker(H^0(\PP^1;\FF)\ra T_{\g}\G).
\end{equation}
But the LHS of \eqref{4bexpression3} is equal to $\dim H^1(\PP^1;\FF')$ by \eqref{4badded2}. This gives inequality \eqref{4bneed}. Hence the proof for the case where $C_1$ is absent is complete.

Finally, we deal with the general case. By the normalization sequence (e.g. \cite{Mirror}), it suffices to show 
\begin{enumerate}
\item $H^1(C_0; u^*\TT_{\fib{f}}|_{C_0})=H^1(C_1; u^*\TT_{\fib{f}}|_{C_1})=0$; and 
\item the evaluation map $H^0(C_1; u^*\TT_{\fib{f}}|_{C_1})\ra \bigoplus_i T_{u(p_i)}\fib{f}$ at the intersection points $\{p_i\}$ of $C_0$ and $C_1$ is surjective.
\end{enumerate}
We have proved $H^1(C_0; u^*\TT_{\fib{f}}|_{C_0})=0$. Observe that $u^*\TT_{\fib{f}}|_{C_1}$ is an extension of a trivial bundle by $(u|_{C_1})^*\TT^{vert}_{\pi_f}$. The rest of the statements then follow from the well-known fact that $\TT_{G/P}$ is globally generated. The proof of Proposition \ref{4bmain} is complete. 
\end{proof}

\begin{lemma}\label{4bsurj} 
The map $h$ defined in \eqref{4bexpression15} is surjective.
\end{lemma}
\begin{proof}
Let $\a\in R_v$ and $\a(\mu)>k>0$. Pick a non-zero vector $X_{\a}\in\gg_{\a}$. Define $r_{\a,k}:\AA^1_s\ra \G$ by $s\mapsto \exp(sz^kX_{\a})\cdot \g$ where the action is the given $U_{\a,k}$-action on $\G$. The surjectivity of $h$ follows if we can show that $h$ sends $v:= D_{s=0}r_{\a,k}(1)\in T_{\g}\G$ to $z^kX_{\a}\in z^k\gg_{\a}$.

Consider the $G/P$-bundle $\fib{f\circ r_{\a,k}}$ over $\PP^1\times\AA^1_s$. Notice that $u$ naturally factors through a morphism $u'':C_0=\PP^1\ra\fib{f\circ r_{\a,k}}$. Since $f$ is $T$-good and in particular $U_{\a,k}$-equivariant, $f\circ r_{\a,k}$ is equal to the morphism $s\mapsto \exp(sz^kX_{\a})\cdot t^{\mu}$. By the definition of the $U_{\a,k}$-action on $\ag$ (see \eqref{2caction}), we have
\[ \fib{f\circ r_{\a,k}} \simeq \left(\AA^1_z\times \AA^1_s\times G/P\times \{0,\infty\}\right)/_{(z,s,y,0)~\sim~(z^{-1},s,\exp(sz^kX_{\a})\mu(z)\cdot y,\infty)}.\]
From this explicit construction we see that the vector bundle $(u'')^*\TT^{vert}_{\pr_{\PP^1}\circ\pi_{f\circ r_{\a,k}}}$ is defined by a transition matrix of the form
\[ 
\begin{bmatrix}
A(z)& z^kX_{\a}\\
0&\id
\end{bmatrix}
\]
where $A(z)$ is the same as the one defined in \eqref{4badded1}. Since the transition matrix 
\[ 
\begin{bmatrix}
A(z)& B(z)v\\
0&\id
\end{bmatrix}
\]
also defines the same vector bundle, these two matrices differ by a gauge transformation. A straightforward computation shows that the difference $B(z)v-z^kX_{\a}$ lies in the sum of $z^{k'}\gg_{\a'}$ with $\a'\in R_v$ and $k'\leqslant 0$ or $k'\geqslant \a(\mu)$. Since $\a(\mu)>k>0$, we have $h(v)=z^kX_{\a}$ as desired.
\end{proof}

Let $\wl\in W_{af}^-$. Recall 
\[ f_{\ag,\wl}:\G_{\wl}\ra \ag\]
is the $B^-$-good morphism fixed in Definition \ref{2cdef}. Clearly, it is $T$-good. It follows that the condition in Proposition \ref{4bmain} is satisfied, and hence $\MMw$ is regular for any $\eta\in\Q/\Q_P$. Moreover, since $f_{\ag,\wl}$ is $B^-$-equivariant, it follows that by Lemma \ref{3blemma} $\MMw$ has a $B^-$-action and $\ev:\MMw\ra G/P$ is $B^-$-equivariant.

Now let $v\in W^P$. Recall 
\[ f_{G/P,v} :\G_v\ra G/P \]
is the $B^+$-equivariant morphism fixed in Definition \ref{2bdef}. By Lemma \ref{2blemma}, $f_{G/P,v}$ is transverse to $\ev:\MMw\ra G/P$, i.e. the sum of the images of the tangent maps of these morphisms is equal to the tangent space of the common target. It follows that the stack 
\[  \MM(\wl,v,\eta):= \MMw\times_{(\ev,f_{G/P,v})} \G_v\]
is regular. Notice that there is still a $T$-action on $\MM(\wl,v,\eta)$, since $T=B^-\cap B^+$.

\begin{lemma}\label{4bdim}
Suppose $\MM(\wl,v,\eta)\ne\emptyset$. The dimension of $\MM(\wl,v,\eta)$ is equal to $\ell(\wl)+\ell(v)+\sum_{\a\in R^+\setminus R^+_P}\a(\eta)$.
\end{lemma}  
\begin{proof}
By Lemma \ref{3bdim}, the virtual dimension of $\MMw$ is equal to $\ell(\wl)+\dim G/P +\sum_{\a\in R^+\setminus R^+_P}\a(\eta)$. It follows that the virtual dimension of $\MM(\wl,v,\eta)$ is equal to 
\begin{align*}
& \left( \ell(\wl)+\dim G/P +\sum_{\a\in R^+\setminus R^+_P}\a(\eta)\right) + \ell(v) - \dim G/P \\
=&~ \ell(\wl)+\ell(v)+\sum_{\a\in R^+\setminus R^+_P}\a(\eta).
\end{align*}
Since  $\MM(\wl,v,\eta)$ is regular, its dimension is equal to its virtual dimension. The proof is complete.
\end{proof}
\subsection{Zero-dimensional components} \label{4c} Let $\wl\in W_{af}^-$, $v\in W^P$ and $\eta\in\Q/\Q_P$. Put $\MM:=\MM(\wl,v,\eta)$, the stack defined at the end of Section \ref{4b}.
\begin{proposition}\label{4cmain} The stack $\MM$ is non-empty and zero-dimensional if and only if $v\in wW_P$, $\eta=\l+\Q_P$ and the following set of conditions, which we denote by $C(\wl)$, holds:
\[\left\{
\begin{array}{rcl}
\a\in (-w R_P^+)\cap R^+ &\Longrightarrow& \a(w(\l))=1\\
\a\in (-w R_P^+)\cap (-R^+) &\Longrightarrow& \a(w(\l))=0
\end{array}
\right. .\]
In this case, $\MM$ is a one-point stack with trivial stabilizer.
\end{proposition}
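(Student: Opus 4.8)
The plan is to analyze when the regular stack $\MM = \MM(\wl,v,\b)$ can be zero-dimensional by combining a dimension count with a rigidity argument forcing every point of $\MM$ to be $T$-fixed. First I would observe that $\MM$ carries a $T$-action (since $T = B^-\cap B^+$), so if $\MM$ is non-empty and $\dim\MM = 0$, then $\MM$ consists of finitely many $T$-fixed points. A $T$-fixed point of $\MM(\wl,v,\b)$ is a triple $(u,[p],\tilde{p})$ with $u$ a stable section of $\fib{\wl}$, together with a point of $\G_v$ mapping to $\ev(u)$, which is $T$-invariant in the sense of Definition \ref{4bTfix}. I would argue such a $T$-invariant $u$ must factor through a $T$-fixed fiber $\fib{f|_{\g}} \simeq \fib{\mu_{\g}}$ over some $\g \in \G^T$, and moreover, using the analysis of Section \ref{4a}, that $u$ must actually be one of the $T$-invariant sections $u_{\mu_{\g},v'}$ with \emph{no} bubble components: a bubble in a fiber of $\fib{\mu_{\g}}$ would be a nonconstant $T$-invariant rational curve in $G/P$, contributing positively to the expected dimension (by convexity, $H^1$ vanishes and the obstruction bundle is trivial), hence a genuine positive-dimensional family unless it is absent — here I would invoke the regularity of $\MM$ established at the end of Section \ref{4b}, so that $\dim\MM$ equals its expected dimension and no obstruction bundle can absorb the excess. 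This pins the $T$-fixed locus down to sections of the form $u_{\mu_{\g},v'}$.

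Next I would compute the expected dimension of $\MM$ at such a point. The expected dimension of $\MMw$ at $u_{\mu_{\g},v'}$ is, by Riemann-Roch applied to $u_{\mu_{\g},v'}^*T\fib{\wl}$ (using $H^1 = 0$ from Proposition \ref{4bmain}) together with the $+1$ for the node-at-$\infty$ gluing with $D_{\infty}$ and $-1$ for the automorphisms — equivalently, $\dim\G + \langle c_1(TG/P), \b\rangle + \dim(G/P) - \dim(G/P)$ type bookkeeping. After taking the fiber product with $\G_v$ along $\ev$ (transverse by the Fulton-Woodward argument, see \cite[Section 7]{FW}), the expected dimension drops by $\dim(G/P) - \ell(v)$. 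Using Lemma \ref{4asectiondeg} to compute $\deg u_{\mu_{\g},v'}^*T^v = \sum_{\a\in -v'(R^+\setminus R_P^+)}(-\a(\mu_{\g}))$ and Lemma \ref{4asectionclass} which forces $\b = (v')^{-1}(\mu_{\g}) + \Q_P$, I would reduce the condition $\dim\MM = 0$ to an explicit numerical identity in the roots. The key point is that $\mu_{\g}$ ranges over the $T$-fixed points of $\G = \G_{\wl}$, i.e. over the $\mu$ with $x_{\mu}\GG^0 \in \ol{\asce(\wl)}$, and the dimension bookkeeping of the Bott-Samelson resolution $\G_{\wl}$ makes $\dim\G_{\wl} = \ell(\wl)$; the $T$-fixed point corresponding to the top Schubert stratum is $x_{w(\l)}$, with tangent weights at that point matching the affine inversion set of $\wl$.

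The heart of the argument is then a combinatorial matching: I would show that the numerical identity obtained above is equivalent to $v \in wW_P$ together with the condition set $C(\wl)$. The condition $C(\wl)$ says precisely that the affine roots $(\a,0)$ for $\a \in -wR_P^+$ are "boundary" inversions of $\wl$ in a controlled way — concretely, that $\wl \in W_{af}^-$ and the minimal coset representative property force $\a(w(\l)) \in \{0,1\}$ on the relevant roots, with the split between $1$ and $0$ governed by the sign of $\a$. I expect the main obstacle to be exactly this step: correctly translating "$\wl$ is a minimal length coset representative in $W_{af}/W$" into the statement that the only $T$-fixed point $\g$ of $\G_{\wl}$ that can support a zero-dimensional component is the one over $x_{w(\l)}$, and that for that $\g$ the dimension-zero condition is $C(\wl)$ — this requires carefully relating the inversion set of $\wl$ (which controls $\dim\G_{\wl}$ and the tangent weights $T_{\g}\G_{\wl}$) to the weights $R_v$ and the values $\a(w(\l))$. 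Once the matching is done, uniqueness $\MM = \{u_{w(\l),v}\}$ follows because the $T$-fixed locus is a single reduced point and $\MM$, being $0$-dimensional and regular (hence reduced) with a $T$-action, equals its $T$-fixed locus.
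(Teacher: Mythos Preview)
Your plan is essentially the paper's approach: reduce to $T$-fixed points, rule out bubbles by regularity, and then run a dimension count. You have also correctly located the two pressure points, but you have not resolved them, and the paper's resolutions are worth knowing.

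First, the step you flag as the main obstacle --- showing that the only $T$-fixed point $\g\in\G^T$ supporting a zero-dimensional component is the one over $x_{w(\l)}$ --- is \emph{not} extracted from the numerical identity. The paper handles it \emph{before} any root combinatorics, by a separate rigidity trick: for any Bott--Samelson subvariety $\G'\subset\G$ passing through $\g$, the restriction $f|_{\G'}$ is still $\BB^{0,-}$-equivariant, so Proposition~\ref{4bmain} applies and the corresponding fiber-product moduli $\MM'$ is again regular. Since $u$ lies in $\MM'$ and $\dim\MM'\leqslant\dim\MM=0$, we must have $\G'=\G$; but the only $\g\in\G^T$ not contained in a proper Bott--Samelson subvariety is the top one, mapping to $x_{w(\l)}\GG^0$. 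The same dimension/regularity argument also forces $v'=v$ (and kills bubbles), which you left implicit.

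Second, your ``combinatorial matching'' is where the actual root-theoretic content lives, and it is sharper than a generic numerical identity. With $\mu_{\g}=w(\l)$ fixed, the paper writes down closed formulae for $\ell(\wl)$, $\ell(v)$ and $c_1(u_{w(\l),v}^*T^v)$ --- the first via counting affine walls separating the dominant alcove from $\wl(\Delta_0)$ --- and rewrites $\dim\MM=0$ as a sum over $\a\in R$ of terms each of which is manifestly $\geqslant 0$. Vanishing of each summand is then read off directly as: $\a\in v(R^+\setminus R_P^+)\Rightarrow\a\in wR^+$ (equivalently $v\in wW_P$), together with the two implications defining $C(\wl)$. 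This positivity structure is the mechanism; your proposal does not yet contain it.
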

\begin{proof}
Suppose $\MM\ne\emptyset$ and $\dim\MM=0$. Notice that the boundary of $\MM$ is stratified by the moduli spaces of stable maps satisfying the same conditions as those imposed on points of $\MM$, plus the condition that their domain curves are reducible and have fixed combinatorial types. Arguing as before, we conclude that these strata are smooth and of expected dimension. Since $\dim\MM=0$, they are empty, and hence every point of $\MM$ is represented by a stable map $u$ to $\fib{f_{\ag,\wl}}$ which factors through a section $u'$ of $\fib{f_{\ag,\wl}\circ \g}$ for some $\g:\spec\CC\ra\G_{\wl}$. This section is necessarily $T$-invariant because $\MM$ is zero-dimensional and has a $T$-action. It follows that $\g\in\G_{\wl}^T$, and hence $f_{\ag,\wl}\circ \g=t^{\mu_{\g}}$ for some $\mu_{\g}\in\Q$. Thus we have $u'=u_{\mu_{\g},v'}$ for some $v'\in W^P$, after identifying $\fib{f_{\ag,\wl}\circ \g}$ with $\fib{t^{\mu}}$.

Let us show $\mu_{\g}=\wll$. Let $w't_{\l'}\in W_{af}^-$ be the unique element such that $\mu_{\g}=w'(\l')$. Since $t^{\mu_{\g}}\in\ol{\BB\cdot t^{\wll}}$, we have $\ell(w't_{\l'})\leqslant\ell(\wl)$, and the equality holds if and only if $\wl=w't_{\l'}$. Observe that the section $u_{\mu_{\g},v'}$ also represents a point of $\MM':=\MM(w't_{\l'},v,\eta)$. It follows that $\MM'\ne\emptyset$, and hence, by the regularity, we have $\dim\MM'\geqslant 0$. But by Lemma \ref{4bdim},
\[ 0=\dim\MM=\ell(\wl)+\ell(v)+ \sum_{\a\in R^+\setminus R^+_P}\a(\eta) \geqslant \ell(w't_{\l'})+\ell(v)+ \sum_{\a\in R^+\setminus R^+_P}\a(\eta)=\dim\MM'\geqslant 0. \]
It follows that $\ell(\wl)=\ell(w't_{\l'})$, and hence $\wl=w't_{\l'}$ as desired.

By a similar argument, we have $v'=v$.

To finish the proof, we need the following explicit formulae for the terms $\ell(\wl)$, $\ell(v)$ and $\sum_{\a\in R^+\setminus R^+_P}\a(\eta)=\langle [\PP^1], c_1(u_{w(\l),v}^*\TT^{vert})\rangle $ where $\TT^{vert}$ is the vertical tangent bundle of the fiber bundle $\fib{t^{w(\l)}}\ra\PP^1$. To formulate them, pick a regular dominant element $a\in \hh_{\RR}:=\Q\otimes_{\ZZ}\RR$ which is sufficiently close to the origin and a dominant element $b\in\hh_{\RR}$ which determines the parabolic type of $P$, i.e. $\a_i(b)=0$ if $\a_i\in R^+_P$ and $\a_i(b)>0$ otherwise. We have
\begin{align} \label{4ca}
\ell(\wl) &= \sum_{\a(w(\l)-a)>0}\lfloor \a(w(\l)-a)\rfloor\\
\ell(v) &= -\sum_{\a(v\cdot b)<0}\lfloor \a(-a)\rfloor \nonumber\\
\langle [\PP^1],c_1(u_{w(\l),v}^*\TT^{vert})\rangle &= -\sum_{\a(v\cdot b)<0} \a(w(\l)) \nonumber
\end{align}
where the summations are taken over $\a\in R$ satisfying the stated conditions. The first formula will be proved below, the second is obvious, and the last follows from Lemma \ref{4asectiondeg}. Summing up these equations and using the assumption $\dim\MM=0$, we obtain 
\[ \sum_{\a(w(\l)-a)>0}\lfloor \a(w(\l)-a)\rfloor - \sum_{\a(v\cdot b)<0}\lfloor \a(w(\l)-a)\rfloor=\dim\MM=0.\]
The last equation can be written as 
\begin{equation}\label{4cd}
\sum_{\a(w(\l)-a)>0}(1-A(\a,v))\lfloor \a(w(\l)-a)\rfloor + B(\a,v) = 0
\end{equation}
where 
\[  A(\a,v):= \left\{ 
\begin{array}{cc}
-1& \a(v\cdot b)>0\\
0& \a(v\cdot b)=0\\
1& \a(v\cdot b)<0
\end{array}
\right. \quad\text{ and }\quad 
B(\a,v) :=\left\{
\begin{array}{cc}
0& \a(v\cdot b)\leqslant 0\\
1& \a(v\cdot b)>0
\end{array}
\right. .\]
Observe that each of the summands of the LHS of \eqref{4cd} is non-negative. It follows that they are all equal to 0. This holds precisely when the following conditions are satisfied:
\[\left\{
\begin{array}{rcl}
\a\in v(R^+\setminus R_P^+)  &\Longrightarrow& \a\in wR^+\\
\a\in vR_P \cap (-wR^+)\cap  R^+ &\Longrightarrow& \a(w(\l))=1\\
\a\in vR_P \cap (-wR^+)\cap (-R^+) &\Longrightarrow& \a(w(\l))=0
\end{array}
\right. .\]
Here, we have used the assumption $\wl\in W_{af}^-$ which implies $-w(\l)+a\in w\mr{\Lambda}$ where $\mr{\Lambda}$ is the interior of the dominant chamber. Notice that the first condition is equivalent to $v\in wW_P$, and the conjunction of the other two is equivalent, given the first condition, to $C(\wl)$, since $vR_P\cap(-wR^+)=-wR_P^+$ if $v\in wW_P$. By Lemma \ref{4asectionclass} and the fact that every element of $W_P$ descends to the identity in the quotient $\Q/\Q_P$, we have $\eta=c([u_{w(\l),v}])=v^{-1}w(\l)+\Q_P=\l+\Q_P$. This proves one direction of Proposition \ref{4cmain}. The other direction is clear from the above discussion.

The last assertion follows from the above discussion and the fact that 
\[\# f_{\ag,\wl}^{-1}(t^{\wll})=1\quad\text{ and }\quad\# f_{G/P,v}^{-1}(y_v)=1 .\]
\end{proof}

\bigskip
\begin{myproof}{formula}{\eqref{4ca}} Denote by $\Delta_0$ the dominant alcove. Since  $\wl$ is a minimal length coset representative, the line segment joining $w(\l)$ and $a$ intersects the interior of $\wl(\Delta_0)$. Therefore, $\ell(\wl)$ is equal to the number of affine walls intersecting the interior of this line segment which is easily seen to be the RHS of \eqref{4ca}. 
\end{myproof}
\subsection{Final step} \label{4d}
Following \cite[Lemma 10.2]{LS}, we define $(W^P)_{af}$ to be the set of $\wl\in W_{af}$ such that
\begin{equation}\label{4dadded1}
\left\{
\begin{array}{rcl}
\a\in R_P^+\cap (-w^{-1}R^+) &\Longrightarrow& \a(\l)=-1\\
\a\in R_P^+\cap w^{-1}R^+&\Longrightarrow& \a(\l)=0
\end{array}
\right. .
\end{equation}
\begin{theorem}\label{4dmain} The $\RRR$-algebra homomorphism $\Phi_{SS}$ defined in Definition \ref{3cdef} satisfies 
\[\Phi_{SS}(\ascl_{\wl})=\left\{
\begin{array}{cc}
q^{\l+\Q_P}\scl_{\widetilde{w}}& \wl\in (W^P)_{af}\\ [.5em]
0& \text{otherwise}
\end{array}
\right.
\]
for any $\wl\in W_{af}^-$, where $\widetilde{w}\in W^P$ is the minimal length representative of the coset $wW_P$. 
\end{theorem}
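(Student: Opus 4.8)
The plan is to evaluate $\ev_*[\MMw]^{vir}$ on the Schubert basis of $H_T(G/P)$ and then feed the answer into Proposition~\ref{4cmain}. By the regularity proved at the end of Section~\ref{4b}, each $\MMw$ is smooth and pure of its expected dimension, so $[\MMw]^{vir}=[\MMw]$ is the ordinary fundamental class and Definition~\ref{3cdef} reads $\Phi_{SS}(\ascl_{\wl})=\sum_{\b}q^{\b}\,\ev_*[\MMw]$, where $\ev_*[\MMw]$ is viewed in $H_T(G/P)$ via Poincar\'e duality. Since $\{\scl_v\}_{v\in W^P}$ is an $H_T(pt)$-basis of $H_T(G/P)$ with dual basis $\{\pd[\ol{\sce(v)}]\}_{v\in W^P}$ (Section~\ref{2b}), I would write $\ev_*[\MMw]=\sum_{v\in W^P}c^{\b}_v\,\scl_v$ with $c^{\b}_v\in H_T(pt)$, so that everything reduces to identifying the scalars $c^{\b}_v$.

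For the coefficients I would pair against the dual basis: $c^{\b}_v=\int^T_{G/P}\ev_*[\MMw]\cup\pd[\ol{\sce(v)}]$. Since $(f_{G/P,v})_*[\G_v]=[\ol{\sce(v)}]$ and $f_{G/P,v}$ is transverse to $\ev$ (Section~\ref{4b}), the projection formula turns the right-hand side into the $T$-equivariant degree over a point of $[\MM(\wl,v,\b)]$, the fundamental class of the regular fibre product $\MMw\times_{(\ev,f_{G/P,v})}\G_v$ from the end of Section~\ref{4b}. A degree count then collapses almost all of these: $c^{\b}_v$ lies in $H_T(pt)$ in cohomological degree $2\bigl(\dim(G/P)-\dim\MMw-\ell(v)\bigr)=-2\dim\MM(\wl,v,\b)$ (using $\dim\MM(\wl,v,\b)=\dim\MMw+\ell(v)-\dim(G/P)$ from transversality), so $c^{\b}_v=0$ unless $\dim\MM(\wl,v,\b)=0$, and in the latter case $c^{\b}_v$ equals the number of points of $\MM(\wl,v,\b)$, which is reduced by regularity.

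Now Proposition~\ref{4cmain} supplies the remaining input: for $\MM(\wl,v,\b)$ non-empty, $\dim\MM(\wl,v,\b)=0$ happens exactly when $v\in wW_P$, $\b=\l+\Q_P$ and the condition $C(\wl)$ holds, and then $\MM(\wl,v,\b)=\{u_{w(\l),v}\}$ is a single point, so $c^{\b}_v=1$. Since $wW_P\cap W^P=\{\widetilde w\}$, this forces $\ev_*[\MMw]=0$ for every $\b\ne\l+\Q_P$, and $\ev_*[\MM(\wl,\l+\Q_P)]$ equals $\scl_{\widetilde w}$ if $C(\wl)$ holds and is $0$ otherwise; summing over $\b$ gives $\Phi_{SS}(\ascl_{\wl})=q^{\l+\Q_P}\scl_{\widetilde w}$ when $C(\wl)$ holds and $0$ otherwise. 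To finish, I would check that for $\wl\in W_{af}^-$ the condition $C(\wl)$ is precisely the defining condition of $(W^P)_{af}$: the bijection $\g\mapsto\a:=-w\g$ carries $R_P^+$ onto $-wR_P^+$, sends $R_P^+\cap(-w^{-1}R^+)$ to $(-wR_P^+)\cap R^+$ and $R_P^+\cap w^{-1}R^+$ to $(-wR_P^+)\cap(-R^+)$, and satisfies $\a(w(\l))=-\g(\l)$, so the two implications defining $C(\wl)$ become $\g\in R_P^+\cap(-w^{-1}R^+)\Rightarrow\g(\l)=-1$ and $\g\in R_P^+\cap w^{-1}R^+\Rightarrow\g(\l)=0$.

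The step I expect to be the main obstacle is the middle one: turning the topological pushforward $\ev_*[\MMw]$ into honest point counts on the fibre products $\MM(\wl,v,\b)$. It requires combining regularity with the $B^{\pm}$-transversality of Section~\ref{4b} to identify the refined pullback of $\ev_*[\MMw]$ along $f_{G/P,v}$ with $[\MM(\wl,v,\b)]$ in equivariant (Borel--Moore) homology, and then the degree bookkeeping in the free $H_T(pt)$-module $H_T(G/P)$ to annihilate every positive-dimensional fibre product. Everything else is either already in place (regularity and transversality in Section~\ref{4b}, the zero-dimensional classification of Proposition~\ref{4cmain}) or the short combinatorial substitution above.
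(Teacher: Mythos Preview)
Your proposal is correct and follows essentially the same route as the paper: expand $\Phi_{SS}(\ascl_{\wl})$ in the Schubert basis, identify each coefficient $c^{\b}_v$ as $\int_{\MM(\wl,v,\b)}1$, use the regularity and transversality of Section~\ref{4b} to see this vanishes unless $\dim\MM(\wl,v,\b)=0$, apply Proposition~\ref{4cmain}, and finish with the substitution $\a\leftrightarrow -w^{-1}\a$ to match $C(\wl)$ with the defining condition of $(W^P)_{af}$. Your write-up is simply a more detailed unpacking of the same argument, including the explicit degree bookkeeping that the paper leaves implicit.
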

\begin{proof}
Write $\Phi_{SS}(\ascl_{\wl})=\sum_{v\in W^P}\sum_{\eta\in\Q/\Q_P}q^{\eta} c_{\eta,v} \scl_v$. Since $\MM(\wl,v,\eta)$ is regular and $f_{G/P,v}$ is the composition of a $T$-equivariant resolution $\G_v\ra \ol{B^+\cdot y_v}$ and the inclusion $\ol{B^+\cdot y_v}\hookrightarrow G/P$, we have 
\[c_{\eta,v}=\int_{\MM(\wl,v,\eta)} 1 \in \RRR\]
which is zero unless $\MM(\wl,v,\eta)$ is non-empty and zero-dimensional. By Proposition  \ref{4cmain}, the last condition is equivalent to $v\in wW_P$, $\eta=\l+\Q_P$ and the condition $C(\wl)$, and in this case $c_{\eta,v}=1$. It remains to show that $C(\wl)$  is equivalent to the condition $\wl\in (W^P)_{af}$. This is proved by replacing $\a$ in \eqref{4dadded1} with $-w^{-1}\a$. 
\end{proof}


\end{document}